\def\ps@pprintTitle{%
   \let\@oddhead\@empty
   \let\@evenhead\@empty
   \let\@oddfoot\@empty
   \let\@evenfoot\@oddfoot
}
\theoremstyle{plain}
\newtheorem{theorem}{Theorem}[section]
\newtheorem{proposition}[theorem]{Proposition}
\newtheorem{lemma}[theorem]{Lemma}
\theoremstyle{definition}
\newtheorem{remark}[theorem]{Remark}
\newcommand{\F}{\mathbb{F}}
\newcommand{\Q}{\mathbb{Q}}
\newcommand{\C}{\mathbb{C}}
\newcommand{\N}{\mathbb{N}}
\newcommand{\Z}{\mathbb{Z}}
\newcommand{\R}{\mathbb{R}}
\newcommand{\st}{{|}}
\newcommand{\frm}{\mathfrak{m}}
\newcommand{\ord}{{\rm ord}}
\newcommand{\id}{{\rm id}}
\newcommand{\calC}{{\cal{C}}}
\newcommand{\vp}{{\varphi}}
\begin{document}

\begin{frontmatter}

\title{A skew Newton-Puiseux Theorem}

\author[OPENU]{Elad Paran}
\ead{paran@openu.ac.il}

\author[TDTU]{Thieu N. Vo\corref{mycorrespondingauthor}}
\cortext[mycorrespondingauthor]{Corresponding author}
\ead{vongocthieu@tdtu.edu.vn}

\address[OPENU]{The Open University of Israel}

\address[TDTU]{Fractional Calculus, Optimization and Algebra Research Group, Faculty of Mathematics and Statistics, Ton Duc Thang University, Ho Chi Minh City, Vietnam}

\begin{abstract}
We prove a skew generalization of the Newton-Puiseux theorem for the field $F = \bigcup_{n=1}^\infty \C((x^\frac{1}{n}))$ of Puiseux series: For any positive real number $\alpha$, we consider the $\C$-automorphism $\sigma$ of $F$ given by $x \mapsto \alpha x$, and prove that every non-constant polynomial in the skew polynomial ring $F[t,\sigma]$ factors into a product of linear terms. This generalizes the classical theorem where $\sigma = \id$, and gives the first concrete example of a field of characteristic $0$ that is algebraically closed with respect to a non-trivial automorphism -- a notion studied in works of Aryapoor and of Smith. Our result also resolves an open question of Aryapoor concerning such fields. A key ingredient in the proof is a new variant of Hensel's lemma.

\end{abstract}

\end{frontmatter}

\section{Introduction}

Let $F$ be a field and let $\sigma$ be an endomorphism of $F$. We shall say that $F$ is {\bf algebraically closed with respect to $\sigma$}, if every non-constant polynomial in the skew polynomial ring $F[t,\sigma]$ factors into a product of linear terms (equivalently, if every such polynomial has a $\sigma$-zero, in the sense introduced in \cite{LL88}). This notion was studied in \cite{Sm77} and in the recent paper \cite{Ar23}, with different terminologies: Aryapoor calls such a field $F$ a $1$-algebraically closed $\sigma$-field, see \cite[\S4.1]{Ar23}, while Smith refers to the ring $F[t,\sigma]$ itself as algebraically closed, see \cite[p.~332]{Sm77}. The first non-trivial example (where $\sigma \neq \id$) for such a field appears in the classic paper \cite{Ore33b}, who proved that the algebraic closure of a finite field is algebraically closed with respect to the Frobenius automorphism.

An interesting result of \cite{Ar23} is that every field $F$ equipped with an automorphism $\sigma$ can be embedded in a field which is algebraically closed with respect to an extension of $\sigma$. This theorem is proven via transfinite induction, and does not yield any concrete examples. In fact, it seems that the only explicit example in the literature is the mentioned one of Ore, and in particular no concrete examples in characteristic $0$ are given in any of the works mentioned above.

In the present work, we consider this notion in the context of the classical Newton-Puiseux theorem, which states that the field of Puiseux series $F = \bigcup_{n=1}^\infty \C((x^\frac{1}{n}))$ is algebraically closed \cite[Corollary 1.2, p.~112]{Coh91}. We prove the following generalization of this theorem: \begin{theorem}\label{main:intro} Let $\alpha$ be a positive real number. Let $\sigma$ be the $\C$-automorphism of $F = \bigcup_{n=1}^\infty \C((x^\frac{1}{n}))$ determined by $\sigma(x^\frac{1}{n}) = (\alpha x)^\frac{1}{n}$ for all $n \geq 1$. Then $F$ is algebraically closed with respect to $\sigma$.\end{theorem}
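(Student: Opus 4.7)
I would adapt the classical Newton--Puiseux method to the skew setting, using the key fact that $\sigma$ preserves the $x$-adic valuation $v$ on $F$: since $\sigma(x^{1/n}) = \alpha^{1/n} x^{1/n}$, one has $v \circ \sigma = v$. By right Euclidean division in $F[t,\sigma]$, it suffices to produce, for any non-constant $f \in F[t,\sigma]$, a single $\sigma$-zero $c \in F$; from the factorization $f = g \cdot (t-c)$ with $\deg g = \deg f - 1$, the theorem then follows by induction on degree.

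\textbf{Newton polygon and residual equation.} Write $f(t) = \sum_{i=0}^n a_i t^i$, form the Newton polygon of $\{(i, v(a_i))\}$, and choose a face of slope $-s \in \Q$ with index set $I$. Candidates for a $\sigma$-zero of valuation $s$ have leading term $c_0 x^s$ with $c_0 \in \C^*$. Using the Lam--Leroy evaluation $f(c) = \sum_i a_i N_i(c)$ with $N_i(c) = \sigma^{i-1}(c) \cdots \sigma(c) \, c$, a direct computation gives $N_i(c_0 x^s) = c_0^i \alpha^{s \binom{i}{2}} x^{is}$ (using $\sigma(x^s) = \alpha^s x^s$ and $\sigma|_{\C} = \id$). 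Cancellation of the leading term of $f(c_0 x^s)$ therefore reduces to the polynomial equation
\[
P_s(u) \;:=\; \sum_{i \in I} \tilde a_i \, \alpha^{s \binom{i}{2}} \, u^i \;=\; 0
\]
over $\C$, where $\tilde a_i$ denotes the leading coefficient of $a_i \in F$. Since $|I| \geq 2$, the fundamental theorem of algebra yields a nonzero root $c_0 \in \C^*$, producing an initial approximation $c^{(0)} = c_0 x^s$ with $v(f(c^{(0)})) > \min_i (v(a_i) + is)$.

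\textbf{Hensel lifting and main obstacle.} To upgrade $c^{(0)}$ to an exact $\sigma$-zero, I would iterate this Newton-polygon step: given an approximation $c^{(k)}$, produce a skew polynomial $\tilde f^{(k)} \in F[t,\sigma]$ encoding the perturbation of $f$ about $c^{(k)}$ (obtained, for instance, by repeated right-division by $t - c^{(k)}$), reapply the Newton polygon construction to $\tilde f^{(k)}$ to extract a correction $c_k x^{s_k}$ with strictly increasing $s_k$, and set $c^{(k+1)} := c^{(k)} + c_k x^{s_k}$. The heart of the matter is a skew analogue of Hensel's lemma, guaranteeing convergence of $(c^{(k)})$ inside $F$. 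Here lies the principal obstacle: linearizing the $\sigma$-evaluation around $c^{(k)}$ does not yield a formal derivative but rather an expression of the form
\[
\sum_i a_i \, N_i(c^{(k)}) \sum_{j=0}^{i-1} \frac{\sigma^j(y)}{\sigma^j(c^{(k)})},
\]
mixing $\sigma^j(y)$ for several $j$, so the usual Newton iteration is unavailable. One must also control the ramification indices of the partial sums --- that is, the denominators $n_k$ with $c^{(k)} \in \C((x^{1/n_k}))$ --- so that the limit lies in $F = \bigcup_n \C((x^{1/n}))$ rather than in a larger formal completion; in the classical setting this is ensured by a bound in terms of $\deg f$, and a corresponding bound in the skew context is presumably what the paper's new Hensel variant supplies.
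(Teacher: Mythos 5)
Your proposal correctly recognizes the classical Newton--Puiseux template---Newton polygon, residual equation over $\C$, then lifting---and the formula $N_i(c_0 x^s) = c_0^i \alpha^{s\binom{i}{2}} x^{is}$ is correct. You also correctly locate the obstacle: linearizing a skew evaluation around an approximate zero does not produce an ordinary derivative, so the usual Hensel/Newton iteration is unavailable, and one must separately control the ramification index of the partial sums. But the plan stops precisely at that point, openly conceding that it does not know how to supply the lifting step. That is a genuine gap, and it is where all the substance of the theorem resides. One concrete diagnostic that the missing step is not a routine adaptation: your outline through the residual equation is valid for \emph{any} nonzero complex $\alpha$, yet the result is false in general---for $\alpha = \mathrm{i}$ the polynomial $t^2 - (1+x^2)$ has no $\sigma$-zero (Remark \ref{non_real}). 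The positivity of $\alpha$ must therefore enter somewhere in the lifting argument, and your plan never uses it.

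The paper in fact takes a different route from your sketch. Rather than approximate a single root, it produces a monic factorization directly via a new skew Hensel's lemma (Proposition \ref{Hensel}), whose coprimality hypothesis is deliberately twisted: one asks not that $\bar g$ and $\bar h$ be coprime, but that $\bar g$ be coprime to $\overline{h^{\phi^n}}$ for every $n\geq 1$, where $\phi(p) = xpx^{-1}$ is the inner automorphism of the relevant power series skew ring. To reach a polynomial to which this lemma applies, the paper rescales $t \mapsto x^{-r}t$ and then shifts $t \mapsto t - b$ to kill the $t^{d-1}$ coefficient; the shift does not preserve the ring $F[t,\sigma]$ and forces one into a ring $F[t,\sigma,\delta]$ with an inner $\sigma$-derivation $\delta$ (Lemmas \ref{shift}, \ref{lem:isom}, \ref{zero_coefficient}). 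Verifying the twisted coprimality condition then becomes an orbit analysis over $\C$ of the affine map $T(w) = \alpha^{-1}w + a_0(\alpha^{-1}-1)$; Lemma \ref{zero_sum}, where the positivity of $\alpha$ does the work, shows the residue roots cannot all lie in a single orbit of $T$ while still summing to zero. So the paper's Hensel variant is not a black box supplying a ramification bound to your iteration: it replaces the iteration with a factorization, and it is the $\phi^n$-twisted coprimality condition, not a derivative condition, that the positivity of $\alpha$ secures.
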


The case where $\alpha = 1$ (equivalently, $\sigma = \id$) of this result is the classical Newton-Puiseux theorem. The case where $\alpha \neq 1$ provides the first concrete example of a characteristic $0$ field which is algebraically closed with respect to a non-trivial automorphism. Theorem \ref{main:intro} also resolves an open question raised in \cite{Ar23}, see Remark \ref{open} and Remark \ref{final} below.

One ingredient in the proof of Theorem \ref{main:intro} is a non-commutative variant of Hensel's lemma, Proposition \ref{Hensel}, which may be of independent interest. Hensel's lemma has been previously studied in the context of skew polynomial rings in \cite{Ar09},  however the version we present here involves a more subtle condition on the residue polynomials than the usual one. We provide examples in \S\ref{skew_hensel} 
that demonstrate why our condition is the ``right" condition for skew polynomial rings. To the best of our knowledge, this variant does not appear elsewhere in the literature. The utility of Proposition \ref{Hensel} is further demonstrated via its usage in the proof of Theorem \ref{main:intro} in \S\ref{sec:newton}.

Our general proof strategy of Theorem \ref{main:intro} is similar to that of the classical theorem: Starting with a general polynomial, we apply a serious of reduction steps in order to bring it to a form for which Hensel's lemma applies. However, when making these reduction steps some complications arise that do not have a counter-part in the classical proof via Hensel's lemma (see for example in \cite[Lecture 12]{Ab90}): The main difficulty is that linear changes of variable do not induce automorphisms of the ring $F[t,\sigma]$, but rather induce a change of structure which involves $\sigma$-derivations (in particular, see Lemma \ref{shift} and Lemma \ref{lem:isom}). Thus in order to prove the theorem for $F[t,\sigma]$ we are forced to factor certain polynomials in a more general ring, of the form $F[t,\sigma, \delta]$, where $\delta$ is an inner $\sigma$-derivation. Therefore, before proving the main theorem, we establish a series of lemmas and propositions that track the properties of our polynomial through several steps of structure change.

This paper is organized as follows: In \S\ref{prelim} we quickly review some basic material concerning general skew polynomial rings and substitution in such polynomials. In \S\ref{skew_hensel} we establish our variant of Hensel's lemma. In \S\ref{sec:newton} we prove Theorem \ref{main:intro} via the approach discussed above.

\section{Preliminaries}\label{prelim}

Throughout this work, all rings are assumed to be associative and unital.

Let $A$ be a (possibly noncommutative) ring, let $\sigma$ be an endomorphism of $A$ and let $\delta$ be a $\sigma$-derivation of $A$. That is, $\delta \colon A \to A$ is an additive map satisfying the ``Leibniz rule" $\delta(ab) = a^\sigma \delta(b)+\delta(a)b$ for all $a,b \in A$. Then we may form the classical skew polynomial ring $R = A[t,\sigma,\delta]$, where multiplication is determined by the rule $ta = a^\sigma t + \delta(a)$ for all $a \in A$. 

For a standard introduction to skew polynomial rings, see  \cite[\S2]{Coh95}, or \cite[\S2]{GW04} (see also \cite{Ore33}, the seminal paper on the subject). 

For any monic polynomial $p \in R$ and for any $f \in R$ one can perform left-hand division with remainder and find uniquely determined polynomials $q,r \in R$ such that $f = qp+r$ and $\deg(r) < \deg(p)$. In the case where $p = x-a$ is a linear polynomial, the remainder $r$ is a constant in $A$ whose value is the {\bf $(\sigma,\delta)$-substitution} of $a$ in $f$, denoted by $f(a)$. In particular, a polynomial $f$ is right-hand divisible by $x-a$ if and only if $f(a) = 0$, in which case we say that $a$ is a {\bf $(\sigma,\delta)$-zero} of $f$, or just a {\bf $\sigma$-zero} of $f$ in case $\delta = 0$. If $\delta = 0$ and $f$ is written in the form $f = \sum f_i t^i$, then $f(a)$ is given by:

$$f(a) = f_0 + f_1 a +f_2a^\sigma a+ f_3 a^{\sigma^2}a^\sigma a +\ldots.$$
For further details on substitution in skew polynomial rings, see \cite{LL88} (we note that in \cite{LL88} the ring $A$ is assumed to be a division ring, but the above claims hold for a general ring).

The following proposition is given in \cite[Proposition 2.4, p.~37]{GW04} (with $A,\sigma,t$ here corresponding to $R,\alpha,x$ there, respectively). We formulate it here, as it will be used at several points in the sequel:

\begin{proposition}\label{goodearl}
    Let $S = A[t,\sigma,\delta]$ be a skew polynomial ring. Let $T$ be a ring, let $\phi \colon A \to T$ be a ring homomorphism, and let $y \in T$ be an element satisfying $y\phi(a) = \phi(a^\sigma)y+\phi(\delta(a))$ for all $a \in A$. Then there exists a unique homomorphism $\psi \colon S \to T$ satisfying $\psi|_A = \phi$ and $\psi(t) = y$. 
\end{proposition}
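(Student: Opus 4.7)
The plan is to handle uniqueness by a generation argument, then construct $\psi$ using the canonical form of elements of $S$, and finally verify that the commutation hypothesis imposed on $y$ forces $\psi$ to be multiplicative.

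For uniqueness, the ring $S = A[t,\sigma,\delta]$ is generated by $A$ together with $t$, so any ring homomorphism $\psi$ with $\psi|_A = \phi$ and $\psi(t) = y$ is determined on every expression $\sum_{i=0}^n a_i t^i$ by the formula $\psi\bigl(\sum_{i=0}^n a_i t^i\bigr) = \sum_{i=0}^n \phi(a_i) y^i$. For existence, I would recall that by the construction of $S$ every element admits a unique left-standard expression $\sum_{i=0}^n a_i t^i$ with $a_i \in A$ (equivalently, $S$ is free as a left $A$-module on $\{t^i\}_{i \geq 0}$), and simply define $\psi$ by the displayed formula. Well-definedness follows from uniqueness of the representation, and additivity together with the values $\psi|_A = \phi$ and $\psi(t) = y$ are then immediate.

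The heart of the argument is multiplicativity. By $\Z$-bilinearity it suffices to prove $\psi\bigl((a t^i)(b t^j)\bigr) = \psi(a t^i)\psi(b t^j)$ for $a,b \in A$ and $i,j \geq 0$, and a standard reduction shows this follows once we establish $\psi(t^i b) = y^i \phi(b)$ for all $b \in A$ and $i \geq 0$. The case $i=0$ is trivial. For $i = 1$ we use the defining relation $tb = b^\sigma t + \delta(b)$ in $S$ to compute
$$\psi(tb) = \psi(b^\sigma t + \delta(b)) = \phi(b^\sigma) y + \phi(\delta(b)),$$
which equals $y\phi(b)$ precisely by the hypothesis imposed on $y$. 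The inductive step $i \to i{+}1$ then follows by expanding $t^{i+1} b = t(t^i b)$, writing $t^i b$ in left-standard form, and applying both the induction hypothesis and the $i=1$ case.

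The only real obstacle is this multiplicativity check, and it ultimately reduces to the observation that the single commutation rule $ta = a^\sigma t + \delta(a)$ defining $S$ is respected by $\psi$, which is exactly the compatibility condition imposed on $y$. Beyond this, the proof is a routine verification of a universal property, with no further subtlety.
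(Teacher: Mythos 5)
The paper does not prove this proposition; it quotes it verbatim from Goodearl--Warfield [GW04, Proposition~2.4, p.~37], where it is established by essentially the argument you give. Your proof is correct: uniqueness from the left-standard form, existence via the free left $A$-module structure on $\{t^i\}_{i\ge 0}$, and multiplicativity reduced to the identity $\psi(t^ib)=y^i\phi(b)$, which you verify by induction using the defining relation $tb=b^\sigma t+\delta(b)$ together with the compatibility hypothesis on $y$ --- this is the standard universal-property verification, so there is nothing to compare against in the paper itself.
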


We note that given an endomorphism $\sigma$ of a ring $R$ we shall sometimes use the notation $\sigma(a)$ and sometimes the notation $a^\sigma$, to denote the image of an element $a\in R$ under $\sigma$, according to convenience of presentation.

Let $\frm$ be a (two-sided) ideal in $A$, satisfying $\sigma(\frm) \subseteq \frm$ and $\delta(\frm) \subseteq \frm$. Then $\sigma$ induces an endomorphism of the quotient ring $A/\frm$, given by $\bar{\sigma}(a+\frm) = \sigma(a)+\frm$, and $\delta$ induces a $\bar{\sigma}$-derivation $\bar{\delta}$ on $A/\frm$, given by $\bar{\delta}(a+\frm) = \delta(a)+\frm$. Thus we may form the skew polynomial ring $A/\frm[t,\bar{\sigma},\bar{\delta}]$. 

\begin{lemma}\label{reduction} 
The quotient map $a \mapsto \bar{a}$ from $A$ to $A/\frm$ naturally extends to an epimorphism $p \mapsto \bar{p}$ from $A[t,\sigma,\delta]$ onto $A/\frm[t,\bar{\sigma},\bar{\delta}]$, given by $\sum a_i t^i \mapsto \sum \bar{a}_i t^i$. \end{lemma}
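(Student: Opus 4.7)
The plan is to realize this lemma as a direct application of Proposition \ref{goodearl}. I would set $T = A/\frm[t,\bar{\sigma},\bar{\delta}]$ and take $\phi \colon A \to T$ to be the composition of the quotient map $A \to A/\frm$ with the natural inclusion of coefficients $A/\frm \hookrightarrow T$. Choosing $y$ to be the indeterminate $t$ of $T$, the hypothesis to verify is $y\phi(a) = \phi(a^\sigma)y + \phi(\delta(a))$ for every $a \in A$. But the defining relation of the target skew polynomial ring reads $t\bar{a} = \overline{a^\sigma}\, t + \overline{\delta(a)}$ in $T$, which is precisely this compatibility. Proposition \ref{goodearl} then supplies a unique ring homomorphism $\psi \colon A[t,\sigma,\delta] \to T$ with $\psi|_A = \phi$ and $\psi(t) = t$.

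Unwinding the action of $\psi$ on a general element, the fact that it is a ring homomorphism extending the quotient map and fixing $t$ forces $\psi\bigl(\sum a_i t^i\bigr) = \sum \bar{a}_i t^i$, which is the asserted formula. Surjectivity is then immediate, since every element of $T$ is a finite sum of the form $\sum \bar{a}_i t^i$ for some $a_i \in A$, and is therefore the image of $\sum a_i t^i$ under $\psi$.

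There is essentially no real obstacle here beyond bookkeeping. The only place where care is needed is to remember why $T$ itself is well-defined: this uses the standing hypotheses $\sigma(\frm) \subseteq \frm$ and $\delta(\frm) \subseteq \frm$ to define $\bar{\sigma}$ and $\bar{\delta}$ unambiguously on $A/\frm$ and to confirm that $\bar{\delta}$ is a $\bar{\sigma}$-derivation. Once this is in place, Proposition \ref{goodearl} absorbs all the work of producing $\psi$ as a ring homomorphism and one never has to wrestle directly with the skew multiplication on $A[t,\sigma,\delta]$.
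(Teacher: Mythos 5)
Your proposal is correct and takes essentially the same approach as the paper: both apply Proposition \ref{goodearl} with $T = A/\frm[t,\bar{\sigma},\bar{\delta}]$, $y = t$, and $\phi$ the quotient map into $T$, then identify the resulting homomorphism with the coefficientwise reduction. Your write-up is simply a bit more explicit about verifying the compatibility relation and the well-definedness of $\bar{\sigma}$, $\bar{\delta}$, which the paper leaves implicit.
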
 \begin{proof}
    
Apply Proposition \ref{goodearl} with $T =A/\frm[t,\bar{\sigma},\bar{\delta}], y = t$, and $\phi$ being the map $a \mapsto \bar{a}$ from $A$ to $\bar{A} \subseteq T$. The obtained homomorphism $\psi$ must then coincide with the surjective map $p \mapsto \bar{p}$ considered here, since they agree on $t$ and on $A$. \end{proof}

As mentioned in the introduction, given a field $K$ and an endomorphism $\sigma$ of $K$, we say that $K$ is algebraically closed with respect to $\sigma$ if every non-constant polynomial in the ring $K[t,\sigma]$ factors into a product of linear terms. Equivalently (by induction on the degree), if and only if for each non-constant $f \in K[t,\sigma]$ there exists a $\sigma$-zero in $K$. Of course, every algebraically closed field is algebraically closed with respect to the identity automorphism. However, the field of complex numbers is not algebraically closed with respect to complex conjugation, for example. Indeed, let us denote complex conjugation by $\rho \colon \C \to \C$, and consider the polynomial $f = t^2+i \in \C[t,\rho]$. For any $a \in \C$ we have $a^\rho a \in \R$, hence $f(a) \neq 0$. However, one can show that $\C$ is ``close" to being algebraically closed with respect to $\rho$, in the following sense: Every non-constant polynomial in $\C[t,\rho]$ factors into a product of linear or quadratic terms, see \cite[Corollary 6]{Pum15} or \cite[Theorem 3.1]{BGS15}. Note also that in skew polynomial rings, a non-zero polynomial may have more zeros that its degree. For example, in the above example, every complex number with norm $1$ is a $\rho$-zero of $t^2-1 \in \C[t,\rho]$. 

\begin{remark}\label{open}
It is shown in \cite[Theorem 3]{Sm77} that if a field $K$ is algebraically closed with respect to a non-trivial endomorphism $\sigma$, then $\sigma$ must be of infinite order. Smith only gives one concrete example of a field which is algebraically closed with respect to a non-trivial endomorphism: The algebraic closure $\bar{\F}_p$ of a finite field, with respect to the Frobenius automorphism $\sigma$, see \cite[p.~345]{Sm77} (Smith credits this result to Ore -- it is an immediate consequence of \cite[Theorem 3]{Ore33b}). This example is also discussed in \cite[\S4.3]{Ar23}\footnote{Aryapoor seems unaware of Smith's paper. In particular, Proposition 4.1 in \cite{Ar23} is a special case of Smith's Theorem 3 in \cite{Sm77}.}, where Aryapoor shows that every polynomial  in $\bar{\F}_p[t,\sigma]$ of degree larger than $1$, with a non-zero constant term, has at least $p+1$ distinct $\sigma$-zeros. In light of this property, Aryapoor raises in \cite[\S4.3]{Ar23} the following theoretical question: Does there exist a field $K$, algebraically closed with respect to an automorphism $\sigma \neq \id$, and a polynomial $f \in K[t,\sigma]$ of degree larger than $1$ with a non-zero constant term, that has only one $\sigma$-zero? As a consequence of our main result, we resolve this question to the affirmative, see Remark \ref{final}.
(In Aryapoor's terminology, we have proven the existence of a nontrivial $1$-algebraically closed $\sigma$-field which is not $2$-algebraically closed.)
\end{remark}

\section{A skew Hensel's lemma}\label{skew_hensel}

For this section, let $A$ be a (possibly non-commutative) domain, complete and separated with respect to a principal two-sided ideal $\frm = xA = Ax$, for some $x \in A$. We further assume that $K = A/\frm$ is a (commutative) field. 

Let $\sigma$ be an endomorphism of $A$ and let $\delta$ be a $\sigma$-derivation of $A$, such that $\sigma(\frm) \subseteq \frm$ and $\delta(\frm) \subseteq \frm$. Let $\bar{\sigma}$ and $\bar{\delta}$ be the induced endomorphism and $\bar{\sigma}$-derivation of $K$, as in \S\ref{prelim}. We shall further assume here that $\bar{\sigma} = \id_K$ and $\bar{\delta} = 0$. By Lemma \ref{reduction}, we have a reduction epimorphism $\sum a_i t^i \mapsto \overline{\sum a_it^i} = \sum \overline{a}_it^i$ which maps $A[t,\sigma,\delta]$ onto the commutative ring $K[t,\id,0] = K[t]$.

\begin{remark} The above setup applies to classical commutative settings (with $\sigma = \id, \delta = 0$), for example for $A = \Z_p$, the ring of $p$-adic integers (with $\frm = pA$), or for $A = K[[x]]$, the ring of formal power series over a field $K$ (with $\frm = \langle x \rangle$). However, this setup also applies to various noncommutative settings: For example, we may take $A = K[[x,\tau]]$, the ring of skew formal power series over a field $K$, where $\tau$ is an automorphism of $K$ and $\frm = xA = Ax$, $\sigma = \id_A$,$\delta= 0$. Or, let $A = K[[x]]$ be the usual ring of formal power series over $K$, let $\frm = xA$, and let $\sigma$ be a $K$-automorphism of $A$ determined by $\sigma(x) = \alpha x$ for some fixed $\alpha \in K^\times$, and with $\delta = 0$.


\end{remark}

Let us denote $R = A[t,\sigma,\delta]$. Since $A$ is a domain, so is $R$. For any $a \in A$, we denote by $\ord(a)$ the $x$-adic order of $a$, given by $\ord(a) = \max\{k | a \in x^k A\}$ for $a \neq 0$, and $\ord(0) = \infty$. We extend this order map to $R$ by $\ord(\sum a_it^i) = \min_i \ord(a_i)$.

\begin{lemma}\label{two-sided} Let $n \in \N$. The set $\{p \in A[t,\sigma,\delta] \st \ord(p) \geq n\}$ is a two-sided principal ideal in $A[t,\sigma,\delta]$, generated by $x^n$. \end{lemma}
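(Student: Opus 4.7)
The plan is to identify $I_n := \{p \in R : \ord(p) \geq n\}$ with $x^n R$, and then verify that $x^n R$ is closed under left multiplication. Since $x^n R$ is tautologically a right ideal, these two facts together show that $I_n$ is a two-sided ideal, and as a two-sided ideal it is plainly generated by $x^n$.

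For the set-equality $I_n = x^n R$, I would first use $\frm = xA = Ax$ together with a short induction to obtain $\frm^n = x^n A = A x^n$ for every $n \geq 0$. Then a polynomial $p = \sum a_i t^i$ satisfies $\ord(p) \geq n$ iff each $a_i \in \frm^n = x^n A$, iff $a_i = x^n b_i$ for some $b_i \in A$, iff $p = x^n \sum b_i t^i \in x^n R$; the reverse inclusion $x^n R \subseteq I_n$ is immediate from $x^n A \subseteq \frm^n$.

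The crux is the containment $R x^n \subseteq x^n R$. The key commutation is $tx \in xR$: since $\sigma(\frm) \subseteq \frm = xA$ and $\delta(\frm) \subseteq \frm = xA$, one can write $x^\sigma = xu$ and $\delta(x) = xv$ for some $u, v \in A$, and the defining relation
\[
 tx = x^\sigma t + \delta(x) = x(ut + v)
\]
puts $tx$ inside $xR$. A short induction on $i$ (using $t^i x = t(t^{i-1} x)$ together with the inclusion just established) extends this to $t^i x \in xR$ for all $i \geq 0$; combining with $Ax = xA$ gives $Rx \subseteq xR$. An induction on $n$ via
\[
 R x^{n+1} = (Rx) x^n \subseteq xR \cdot x^n = x(Rx^n) \subseteq x \cdot x^n R = x^{n+1} R
\]
then yields $R x^n \subseteq x^n R$, completing the proof.

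The only real obstacle is the base commutation $tx \in xR$; once it is in hand, everything after is formal inductive bookkeeping. The hypotheses $\sigma(\frm) \subseteq \frm$ and $\delta(\frm) \subseteq \frm$ are precisely what is needed to make that first commutation work, since without them the terms $x^\sigma t$ and $\delta(x)$ could fail to lie in the left ideal $xR$.
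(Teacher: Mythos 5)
Your proof is correct and follows essentially the same strategy as the paper's: identify $\{p : \ord(p)\geq n\}$ with $x^nR$ coefficient-by-coefficient, then establish the key commutation $tx \in xR$ from the hypotheses $\sigma(\frm), \delta(\frm) \subseteq \frm$, and propagate it by induction. One remark is worth making, though: you stop at the single inclusion $Rx^n \subseteq x^nR$, which indeed suffices for the literal statement (since it forces $Rx^nR = x^nR = I_n$, so $I_n$ is the two-sided ideal generated by $x^n$). The paper goes further and asserts the full equality $xR = Rx$ -- the reverse inclusion $xR \subseteq Rx$ is claimed to follow ``similarly'' -- and it is this two-sided equality, not merely the one-sided containment, that the paper uses immediately afterward: Lemma~\ref{two-sided} is invoked to define, for each $p\in R$, an element $p^\phi$ with $p^\phi x = xp$, which requires $xp \in Rx$, i.e.\ $xR \subseteq Rx$. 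So while your proof validly establishes the lemma as written, it does not deliver the ingredient the rest of \S\ref{skew_hensel} draws from it. If you want your argument to support the subsequent construction of the automorphism $\phi$, you should also address the containment $xt \in Rx$; note that this direction is genuinely less automatic than $tx\in xR$, since solving $xt = qx$ for $q$ forces $x \in Ax^\sigma$ in the leading coefficient, which uses that $\sigma$ preserves $\frm$ in a stronger sense than the stated $\sigma(\frm)\subseteq\frm$ (it holds, for instance, when $\sigma$ is an automorphism with $\sigma(\frm)=\frm$, as in the paper's intended applications).
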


\begin{proof} 
By definition, the given set is: 
\begin{align*}
    &\left\{\sum a_i t^i \in A[t,\sigma,\delta] \,\st\, \ord(a_i) \geq n \text{ for all } i\geq 0\right\}\\
    &\qquad \qquad =\left\{\sum x^nb_i t^i \in A[t,\sigma,\delta] \,\st\, b_i \in A \text{ for all } i\geq 0\right\} = x^nR.
\end{align*}
%
%
It is left to prove that $x^nR= Rx^n$. By our assumptions, we have $x^\sigma,\delta(x) \in xA = Ax$, hence $tx = x^\sigma t+\delta(x) \in (xA)R = xR$. It follows by induction that $t^ix \in xR$ for all $i \geq 0$,  hence for all $a \in A$ we have $at^ix \in axR \subseteq AxR = xAR = xR$. Since $R$ is generated additively by monomials of the form $at^i$, $a \in A$, we get that $Rx \subseteq xR$. Similarly one proves that $xR \subseteq Rx$. Thus $Rx = xR$, and by induction it follows that $Rx^n = x^n R$ for all $n \geq 1$. \end{proof}

By Lemma~\ref{two-sided}, for any $p \in R$ there exists an element $p^\phi \in R$ such that $p^\phi x = xp$. Since $R$ is a domain, $p^\phi$ is uniquely determined by $p$, and the map $\phi:p \mapsto p^\phi$ is clearly an automorphism of $R$. Note that $x^\phi = x$.

We now present the following variant of Hensel's lemma:

\begin{proposition} \label{Hensel}
Let $f \in A[t,\sigma,\delta]$ be a monic polynomial in $t$ of degree $d$. Let $g,h \in A[t,\sigma,\delta]$ be monic polynomials of respective degrees $m,d-m$ with $\bar{f} = \bar{g}\bar{h}$, such that $\bar{g},\overline{h^{\phi^{n}}}$ are relatively prime in $K[t]$ for all $n \in \N$. 
Then there exist monic polynomials $\hat{g},\hat{h} \in A[t,\sigma,\delta]$ of respective degrees $m,d-m$ with $\overline{\hat{g}} = \bar{g}, \overline{\hat{h}} = \bar{h}$, such that $f = \hat{g}\hat{h}$. 
\end{proposition}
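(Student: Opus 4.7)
The approach is the classical successive-approximation Hensel iteration adapted to the skew setting. I would inductively construct monic polynomials $g_n, h_n \in R$ of respective degrees $m$ and $d-m$ satisfying $\overline{g_n} = \bar g$, $\overline{h_n} = \bar h$, and $f \equiv g_n h_n \pmod{x^{n+1} R}$, starting from $g_0 = g$ and $h_0 = h$. The desired factors $\hat g, \hat h$ are then recovered as limits in the $\frm$-adic topology on coefficients.

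For the inductive step, given $g_n$ and $h_n$, Lemma~\ref{two-sided} lets me write $f - g_n h_n = \rho_n x^{n+1}$ for some $\rho_n \in R$ of degree less than $d$ (since $f$ and $g_n h_n$ are both monic of degree $d$). I seek corrections of the form $g_{n+1} = g_n + u_n x^{n+1}$ and $h_{n+1} = h_n + v_n x^{n+1}$ with $\deg u_n < m$ and $\deg v_n < d - m$, ensuring that $g_{n+1}, h_{n+1}$ remain monic of the correct degrees and still reduce to $\bar g, \bar h$. Iterating the commutation $xp = p^\phi x$ yields $x^{n+1} p = p^{\phi^{n+1}} x^{n+1}$ for every $p \in R$; expanding $g_{n+1} h_{n+1}$ and collecting the coefficient of $x^{n+1}$ modulo $x^{n+2}$ reduces the desired congruence $f \equiv g_{n+1} h_{n+1} \pmod{x^{n+2} R}$ to
\[
\bar g \cdot \bar v_n \;+\; \bar u_n \cdot \overline{h_n^{\phi^{n+1}}} \;\equiv\; \bar \rho_n \pmod{xR}
\]
in the commutative polynomial ring $K[t]$. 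Since $\phi$ fixes $x$, writing $h_n = h + xs$ for some $s \in R$ gives $\overline{h_n^{\phi^{n+1}}} = \overline{h^{\phi^{n+1}}}$, so the hypothesis that $\bar g$ and $\overline{h^{\phi^{n+1}}}$ are coprime in $K[t]$ allows me to solve this Bezout-type equation. Reducing $\bar u_n$ modulo $\bar g$ via the Euclidean algorithm arranges $\deg \bar u_n < m$, and degree comparison (using $\deg \bar\rho_n < d$) then forces $\deg \bar v_n < d - m$. Arbitrary lifts $u_n, v_n \in R$ of matching $t$-degrees complete the step.

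For convergence, the coefficient sequences of $(g_n)$ and $(h_n)$ are Cauchy in the $\frm$-adic topology on $A$, since $g_{n+1} - g_n$ and $h_{n+1} - h_n$ lie in $x^{n+1} R$ and hence (by Lemma~\ref{two-sided}) have all their coefficients in $x^{n+1} A$. Completeness of $A$ produces limits assembling into monic polynomials $\hat g, \hat h$ of degrees $m$ and $d - m$ with $\overline{\hat g} = \bar g$ and $\overline{\hat h} = \bar h$; separatedness then forces $f - \hat g \hat h \in \bigcap_n x^n R = 0$.

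The main obstacle --- and the essential new feature compared with classical Hensel --- is the appearance of the twist $\phi^{n+1}$ when commuting $x^{n+1}$ past $h_n$ at the $n$-th step. This is precisely why the hypothesis must demand coprimality of $\bar g$ with every iterate $\overline{h^{\phi^{n}}}$, rather than only with $\bar h$ as in the commutative case.
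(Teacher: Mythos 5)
Your proposal is correct and follows essentially the same successive-approximation strategy as the paper's proof, down to the key observation that commuting $x^{n+1}$ past $h_n$ introduces the twist $\phi^{n+1}$ and hence requires coprimality of $\bar g$ with each $\overline{h^{\phi^{n}}}$. One small improvement in your bookkeeping: by insisting $\deg v_n < d-m$ strictly (which the degree count $\deg\bar\rho_n \leq d-1$ indeed permits), your approximants $h_n$ stay literally monic, whereas the paper only enforces $\deg q_n \leq d-m$, so its $h_n$ have a $1+x(\cdots)$ leading coefficient and the monicity of the limit $\hat h$ then rests on an argument left implicit in the paper.
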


Let us emphasize that the conditions of Proposition \ref{Hensel} do not require the residue polynomials $\bar{g},\overline{h}$ to be coprime in $K[t]$. Instead we require $\bar{g}$ to be coprime to each of the polynomials in the sequence $\{\overline{h^{\phi}}$, $\overline{h^{\phi^2}}$, $\overline{h^{\phi^3}},\ldots\}$. Note that if $A$ is a complete discrete valuation ring (DVR) and $R$ is the usual commutative polynomial ring over $A$ (that is, $\sigma = \id$, and $\delta =0$) then Proposition \ref{Hensel} coincides with the usual form of Hensel's lemma for $R = A[t]$. Indeed, in this case the automorphism $\phi$ is the identity and so the polynomials $\overline{h^{\phi}}$, $\overline{h^{\phi^2}}$, $\overline{h^{\phi^3}},\ldots$ all coincide with $\bar{h}$. Thus the condition of Proposition \ref{Hensel} in the commutative case is the usual well-known condition.

Before proving Proposition \ref{Hensel}, let us consider some examples that illustrate why the above condition is the ``correct" condition for skew polynomial rings.

\subsection{Example 1}

Let $A = \C[[x]]$ be the usual formal power series ring, let $\alpha$ be some positive real number, and let $\sigma$ be the $\C$-automorphism of $A$ given by $\sigma(x) = \alpha x$. Let $a,b \in A$ with $\ord(a), \ord(b) > 0$, that is, $a=\sum_{k \geq 1}a_kx^k,  b=\sum_{k \geq 1}b_kx^k  \in x\C[[x]]$. Consider the polynomial $f = t^2-(2+a)t + 1+b$ in the ring $A[t,\sigma]$ (in this example $\delta = 0$. That is, no derivation is involved). Suppose further that $a_1 \neq b_1$.

\begin{proposition} The polynomial $f = t^2-(2+a)t+1+b$ factors non-trivially in $A[t,\sigma]$ if and only if $\sigma \neq \id$ (equivalently, if and only if $\alpha \neq 1$). \end{proposition}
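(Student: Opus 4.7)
My plan is to handle the two directions separately, using a direct coefficient computation for one and Proposition~\ref{Hensel} for the other.

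For the direction ``factors nontrivially $\Rightarrow \sigma \neq \id$'': Suppose $f = (t-c)(t-d)$ in $A[t,\sigma]$. Using the commutation rule $td = d^\sigma t$, expansion gives $f = t^2 - (c + d^\sigma)t + cd$, so $c + d^\sigma = 2+a$ and $cd = 1+b$. Reducing modulo $x$ (where $\bar\sigma = \id$) we get $\bar c + \bar d = 2$ and $\bar c \bar d = 1$, hence $\bar c = \bar d = 1$. Write $c = 1+c'$, $d = 1+d'$ with $c', d' \in xA$, say $c' = c_1'x + O(x^2)$ and $d' = d_1'x + O(x^2)$. Since $\sigma(x) = \alpha x$ the coefficient of $x$ in $c' + (d')^\sigma$ equals $c_1' + \alpha d_1'$, while the coefficient of $x$ in $c' + d' + c'd'$ equals $c_1' + d_1'$ (the $c'd'$ term is $O(x^2)$). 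Matching with $a$ and $b$ yields $a_1 - b_1 = (\alpha - 1)d_1'$. Since $a_1 \neq b_1$ by hypothesis, we must have $\alpha \neq 1$, i.e.\ $\sigma \neq \id$.

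For the converse direction ``$\sigma \neq \id \Rightarrow$ $f$ factors nontrivially'': I will apply Proposition~\ref{Hensel} with $\bar g = \bar h = t-1 \in \C[t]$, so that $\bar f = (t-1)^2 = \bar g \bar h$. I first need to compute the automorphism $\phi$ of $R = A[t,\sigma]$ defined by $p^\phi x = xp$. On $A$ (which is commutative), $\phi$ is the identity. For $t$: since $tx = \sigma(x)t = \alpha x t$, we get $xt = \alpha^{-1}tx$, so $t^\phi = \alpha^{-1}t$. Taking $g = h = t - 1 \in A[t,\sigma]$ as lifts, we find $h^{\phi^n} = \alpha^{-n}t - 1$, so $\overline{h^{\phi^n}} = \alpha^{-n}t - 1$. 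This is coprime to $\bar g = t - 1$ in $\C[t]$ exactly when $\alpha^n \neq 1$, which holds for every $n \in \N$ because $\alpha$ is a positive real number different from $1$. Proposition~\ref{Hensel} therefore produces monic linear lifts $\hat g, \hat h \in A[t,\sigma]$ with $f = \hat g \hat h$, which is the desired nontrivial factorization.

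The main conceptual hurdle is identifying $\phi$ correctly in this setup; once $\phi$ is computed explicitly on $t$, checking the coprimality of $\bar g$ with every $\overline{h^{\phi^n}}$ reduces to the elementary fact that the positive real $\alpha \neq 1$ has no roots of unity among its powers. The rest is bookkeeping with the Leibniz-free skew product rule and reduction modulo $x$.
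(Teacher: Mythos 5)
Your proof is correct, and it is a hybrid of the two arguments the paper itself gives for this example. For the direction ``factors $\Rightarrow \alpha \neq 1$'', your analysis of a monic linear factorization $f = (t-c)(t-d)$, reducing modulo $x$ to force $\bar c = \bar d = 1$ and then matching coefficients of $x$ to derive $(\alpha - 1)d_1' = a_1 - b_1$, is essentially the same computation as the paper's analysis of the $\sigma$-zero condition $g^\sigma g - (2+a)g + 1 + b = 0$ at order $1$ (the paper's $g$ is your $d$). For the direction ``$\alpha \neq 1 \Rightarrow$ factors'', the paper's proof proper runs an explicit induction producing the coefficients $g_1, g_2, \ldots$ of a $\sigma$-zero one at a time by solving a linear equation in $g_{n+1}$ with nonvanishing coefficient $\alpha^{n+1}-1$; you instead invoke Proposition~\ref{Hensel} with $\bar g = \bar h = t-1$ and verify the condition $\overline{h^{\phi^n}} = \alpha^{-n}t - 1$ is coprime to $t-1$. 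This is exactly the ``view the preceding claim with Hensel's lemma in mind'' illustration the paper gives \emph{after} its proof, so your version is correct but trades the paper's pedagogical point (the example can be done by hand, which motivates the Hensel formulation) for a cleaner appeal to the proposition. Both verifications of $\phi(t) = \alpha^{-1}t$ and the coprimality condition check out, and your reduction to a monic linear factorization is valid since the leading unit can be absorbed into one factor.
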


Let us first prove the claim directly, without employing Hensel's lemma, by an explicit analysis:

\begin{proof}

By division with remainder, the polynomial $f$ factors non-trivially in $A[t,\sigma]$ if and only if there exists a $\sigma$-zero $g\in A$ for $f$. That is, an element $g = \sum_{i \geq 0} g_i x^i$ satisfying the equality 

$$\left(\sum g_i \alpha ^i x^i\right) \left(\sum g_i x^i\right) -\left(2+\sum a_i x^i \right) \left(\sum g_i x^i \right)+1+\sum b_i x^i = 0$$
in $\C[[x]]$. By comparing coefficients, we must have  $g_0^2-2g_0+1 = (g_0-1)^2 = 0$, hence $g_0 = 1$. Suppose that we have managed to find $g_0,\ldots,g_n$ such that coefficients of $1,x,\ldots,x^{n}$ in the left-hand side of the presented equality are $0$. Then the coefficient of $x^{n+1}$ in the left-hand side is of the form $$g_{n+1}(\alpha^{n+1}+1) - 2g_{n+1} + (*) = g_{n+1}(\alpha^{n+1}-1)+(*),$$
where $(*)$ is an expression that depends only on $g_0,\ldots,g_{n},a_1,\ldots,a_{n+1}$ and $b_1,\ldots,b_{n+1}.$ From this we see that if $\alpha \neq 1$ we may inductively choose $g_1,g_2,\ldots$ such that the all of the presented coefficients will vanish. 

Conversely, if $\alpha = 1$, then by looking at the coefficient of $x$ we must have $$g_1(1+\alpha)-(2g_1+a_1)+b_1 = b_1-a_1 = 0,$$ 
contradicting our assumption $a_1 \neq b_1$. Thus for $\alpha = 1$ the polynomial $f$ does not factor.
\end{proof}

Let us view the preceding claim with Hensel's lemma in mind: Since $a,b \in x\C[[x]]$, the residue polynomial $\bar{f}$ is $t^2-2t+1 = (t-1)^2$, which of course does not factor into a product of coprime polynomials in $\C[t]$. Thus the condition of the classical form of Hensel's lemma fails, and indeed in the commutative case $\alpha = 1$ the polynomial $f$ does not factor. However,  if $\alpha \neq 1$, we can easily show via Proposition \ref{Hensel} that $f$ factors: For $g = h = t-1 \in A[t,\sigma]$ we have $\bar{f} = \bar{g}\bar{h}$, and we have $(\alpha^{-1}t)x =\alpha^{-1}(\alpha x t) = x t$, hence 
$\phi(t) = \alpha^{-1}t$.  By induction we get: $$\phi^n(h) = \phi^n(t)-1 = \alpha^{-n}t-1$$
for all $n \in \N$. If $\alpha = 1$ we simply get $\phi^n(h) = t-1$ for all $n$, but if $\alpha \neq 1$ then the polynomial $$\overline{\phi^n(h)} = \alpha^{-n}t-1$$
is coprime to $\overline{\phi(g)} = t-1$ in $\C[t]$ for all $n \in \N$, and so the condition of Proposition \ref{Hensel} is met. Thus $f$ factors in $A[t,\sigma]$.

\subsection{Example 2}

Let $A$ be the skew formal power series ring $\C[[x,\rho]]$, where $\rho$ denotes complex conjugation, and let $f = t^2+(1+x)$. In \cite[Example 4.2]{Ar09}, Aryapoor observes that $f$ does not factor in the polynomial ring $A[t]$, despite the fact that the residue polynomial $t^2+1$ factors into a product of coprime polynomials in $\C[t]$: For $g = t+i,h = t-i$, we have $\bar{f} = \bar{g}\cdot\bar{h}$. Thus here the ``usual" form of Hensel's lemma fails. This phenomena was also noted in \cite[\S5]{Vel10}, in a different context. 

Let us revisit this example in light of Proposition \ref{Hensel}: In this example the variable $t$ is central, hence $\phi(t)= t$. However, we have $xi = -ix$, hence  $\phi(i) = -i$, and so $\phi(h) = t+i$. Thus the polynomials $\overline{g} = t+i$ and $\overline{\phi^1(h)} = t+i = \bar{g}$ are {\bf not} coprime in the residue ring $\C[t]$, hence the failure of Hensel's lemma in factoring $f$ is, in fact, not unexpected: The condition of Proposition \ref{Hensel} is not met. 

\subsection{Example 3}

Let $K$ be a field equipped with a derivation, and let $A$ be the ring of Volterra operators over $K$: The elements of $A$ are formal power series of the form $\sum_{i=0}^\infty u_i x^i$ with $u_0,u_1,\ldots \in K$, and multiplication is determined by the rule:

$$xu = \sum_{i=0}^\infty (-1)^iu^{(i)}x^{1+i} = ux-u^{(1)}x^2+u^{(2)}x^3-u^{(3)}x^4+\ldots$$

for all $u \in K$, where $u^{(i)}$ denotes the $i$-th derivative of $u$. In \cite[Example 4.1]{Ar09}, Aryapoor shows that for polynomials in the ring $R = A[t]$ (where $t$ is a central variable), Hensel's lemma in its classical form (that is, with the usual condition on the residue polynomials) holds true. Let us understand this example in light of Proposition \ref{Hensel}:

In the ring $R$, we have $t^\phi = t$, since $t$ is central, and by the above product rule we get
$$u^\phi = u-u^{(1)}x+u^{(2)}x^2-u^{(3)}x^3+\ldots,$$
hence $\overline{u^\phi} = \bar{u}$, for all $u \in K$. For any $a \in xA = Ax$ we clearly have $a^\phi \in xA$, hence $\overline{a} = \overline{a^\phi} =0 $. Thus for a general element $a = \sum_{i=0}^\infty u_i x^i \in A$, we have 

$$\overline{a^\phi} = \overline{u_0^\phi} = \overline{u_0} = \overline{a}.$$
It follows that for any monomial $at^i$ with $a \in A$ we have $$\overline{(at^i)^\phi} = \overline{a^\phi(t^i)^\phi} = \overline{a^\phi}\overline{t^i} =\overline{a}\overline{t^i} = \overline{at^i},$$
hence $\overline{h^\phi} = \overline{h}$ for all $h \in R = A[t]$. 

The condition of Proposition \ref{Hensel} requires $\bar{g}$ to be coprime to each of the polynomials $\overline{h^{\phi^n}}$, but the latter all coincide with $\bar{h}$, by the above observations. Thus in this example the condition of Proposition \ref{Hensel} simply coincides with the usual condition of the classical Hensel's lemma. 

\subsection{Proof of Hensel's lemma}

The above examples demonstrate why the condition of our version of Hensel's lemma is the ``right" condition for skew polynomial rings.  In the next section, we give our main application of this result, as a key ingredient in the proof of Theorem~\ref{main:intro}.

Let us now prove Proposition \ref{Hensel}. 

\begin{proof} 
By Lemma \ref{two-sided}, $xR = Rx$ is a two-sided ideal in $R = A[t,\sigma,\delta]$. Since $A$ is separated and complete with respect to the ideal $xA$, clearly $R$ is separated and complete with respect to $xR$. For each $n \geq 1$, we build polynomials $p_{n-1}, q_{n-1} \in R$ with $\deg(p_i) < m,\deg(q_i) \leq d-m$ (throughout this proof, the ``degree" $\deg$ notation refers to the degree in $t$ of a polynomial in $R$) such that 
\begin{align*}
   &g_{n-1} = g+p_0 + p_1 x + \ldots + p_{n-1}x^{n-1}, \text{ and }\\
   &h_{n-1} = h+q_0 + q_1 x + \ldots + q_{n-1}x^{n-1}  
\end{align*}
satisfy $\ord(f-g_{n-1}h_{n-1}) \geq n$. If we manage this then we may take limits as $n \to \infty $ to obtain the necessary $\hat{g},\hat{h}$.

For $n = 1$ we just take $p_0 = q_0 = 0$, so that $g_{0} = g, h_0 = h$. Then $\bar{f} = \bar{g}\bar{h} = \overline{g_0}\overline{h_0} = \overline{g_0h_0}$, hence $\ord(f-g_{0}h_{0}) \geq 1$. Suppose we have already constructed the necessary $p_0,q_0,p_1,q_1,\ldots,p_{n-1}, q_{n-1}$, for a given $n$. Then $f-g_{n-1}h_{n-1}$ is divisible (from both left and right, by Lemma \ref{two-sided}) by $x^n$. Write $f-g_{n-1}h_{n-1} =  f_nx^n$ for a suitable $f_n \in R$. Note that $\deg(g_{n-1}) = m$ and $\deg(h_{n-1}) \leq d-m$, hence $\deg(f_n) = \deg(f_nx^n) = \deg(f-g_{n-1}h_{n-1}) \leq d$. 

By assumption, $\bar{g},\overline{h^{\phi^{n}}}$ are relatively prime in $K[t]$, hence we may choose $a,b \in R$ such that $\bar{a}\overline{g}+\bar{b}\overline{h^{\phi^{n}}} = 1$, hence $$\bar{a}\overline{g}\overline{f_n}+\bar{b}\overline{h^{\phi^{n}}}\overline{f_n} = \overline{f_n}.$$
Since $g$ is monic, we may apply left division with remainder in $R$ to find polynomials $q,p_n$ such that $bf_n = qg+p_n$ and $\deg(p_n) < m$. Let $q_n$ be the polynomial obtained from $af_n+qh^{\phi^{n}}$ by discarding all coefficients divisible by $x$. Then in the commutative ring $K[t]$, we have:

\begin{align*}
    \overline{gq_n + h^{\phi^{n}}p_n} 
    &= 
\bar{g}\overline{q_n} + \overline{h^{\phi^{n}}}\overline{p_n} = \bar{g}\bar{a}\overline{f_n}+\bar{g}\overline{q}\overline{h^{\phi^{n}}}+\overline{h^{\phi^{n}}}\overline{p_n}\\
    &= \overline{g}\bar{a}\overline{f_n} + \overline{h^{\phi^{n}}}\overline{qg+p_n}=\overline{g}\bar{a}\overline{f_n} + \overline{h^{\phi^{n}}}\bar{b}\overline{f_n}\\
    &= \overline{a}\bar{g}\overline{f_n} + \bar{b}\overline{h^{\phi^{n}}}\overline{f_n}=\overline{f_n}.
\end{align*}
Note that $\deg(\overline{f_n}) \leq(\deg(f_n)) \leq d$ and $$\deg(\overline{h^{\phi^{n}}p_n}) \leq \deg(h^{\phi^{n}}p_n) < (d-m)+m = d,$$ hence from the presented equality we get that $\deg(\overline{q_n}) \leq d-\deg(\bar{g}) = d-m$. By our construction of $q_n$ we have $\deg(q_n) = \deg(\overline{q}) \leq d-m$.

Now, we have the congruence: 
\begin{align*}
    f-g_nh_n &= f-(g_{n-1}+p_n x^n)(h_{n-1}+q_n x^n)\\
    &\equiv f-g_{n-1}h_{n-1}-(p_n x^n h_{n-1}+g_{n-1}q_n x^n)\\
    &= f_nx^n-(p_nh_{n-1}^{\phi^{n}}+g_{n-1}q_n)x^n = (f_n-(p_nh_{n-1}^{\phi^{n}}+g_{n-1}q_n))x^n
\end{align*}
modulo $x^{2n}$, hence also modulo $x^{n+1}$. Thus it remains to show that $\ord(f_n-(p_nh_{n-1}^{\phi^{n}}+g_{n-1}q_n)) \geq 1$, equivalently that $$\overline{f_n} = \overline{p_nh_{n-1}^{\phi^{n}}+g_{n-1}q_n} = \overline{p_n}\overline{h_{n-1}^{\phi^{n}}}+\overline{g_{n-1}}\overline{q_n} = \overline{h^{\phi^{n}}}\overline{p_n}+\overline{g}\overline{q_n} =\overline{gq_n + h^{\phi^{n}}p_n},$$
which we have already shown (in the presented equality we used the fact that $\bar{h} = \overline{h_{n-1}}$ implies that $\overline{h^{\phi^{n}}} = \overline{h_{n-1}^{\phi^{n}}}$, since $x^\phi = x$).
\end{proof}

We note that Hensel's lemma can be studied in a more general context, for rings that are complete with respect to a finitely generated ideal which need not be principal. We have chosen to limit ourselves here to the principal case in order to avoid additional technicalities -- the version provided here is the one needed to prove our main result in the next section.

\section{Proof of the skew Newton-Puiseux theorem}\label{sec:newton}

Let $F = \C((x^*)) = \bigcup_{n \geq 1} \C((x^{\frac{1}{n}}))$ be the field of Puiseux series. Let $\alpha$ be a positive real number, and for each $n\geq 1$ let $\sqrt[n]{\alpha}$ denote a positive $n$-th root of $\alpha$. Then we have a $\C$-automorphism $\sigma$ of $\C((x^*))$, given by $\sigma(x^{\frac{1}{n}}) = \sqrt[n]{\alpha}x^{\frac{1}{n}}$ for all $n \geq 1$, and so we may form the skew polynomial ring $F[t,\sigma]$. 

For any rational number $q = \frac{m}{n} \in \mathbb{Q}$, with $m \in \Z,n \in \N$, let $\sigma^q$ denote the $\C$-automorphism of $F$ determined by $x^\frac{1}{k} \mapsto (\sqrt[nk]{\alpha})^m x$. Clearly, these automorphisms are all well-defined and compatible with each other (that is, $\sigma^{qr} = (\sigma^q)^r$ for all $q,r \in \Q$).


For any $a \in F$ we denote by  $\delta_a \colon F \to F$ the map given by $\delta_a(b) = a(b^\sigma-b)$ for all $b \in F$. Then $\delta_a$ is a $\sigma$-derivation, called an inner $\sigma$-derivation (see \cite[Exercise 2Y, page 44]{GW04} and note that $a(b^\sigma-b) = ab^\sigma - ba = (-a)b-b^\sigma(-a)$, since $F$ is commutative).  

With the above notation in mind, for any $a \in F$ we may form the skew polynomial ring $F[t,\sigma,\delta_a]$.

\begin{lemma}\label{shift} Let $a,b \in F$. Consider the map $\vp \colon F[t,\sigma,\delta_a] \to F[t,\sigma,\delta_{a-b}]$ given by $\vp(\sum g_i t^i) = \sum g_i (t-b)^i$. Then $\vp$ is an isomorphism. \end{lemma}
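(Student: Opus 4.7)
My plan is to construct $\varphi$ and an inverse $\psi$ by two applications of Proposition \ref{goodearl}, and then to deduce that the two compositions equal the identity from the uniqueness clause of that proposition.

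First I will obtain $\varphi$ itself. Set $T := F[t,\sigma,\delta_{a-b}]$, take $\phi \colon F \hookrightarrow T$ to be the inclusion, and put $y := t - b \in T$. To invoke Proposition \ref{goodearl} I must verify that $y\phi(c) = \phi(c^\sigma)y + \phi(\delta_a(c))$ for every $c \in F$, which unwinds to the identity
\[
(t-b)c = c^\sigma(t-b) + a(c^\sigma - c)
\]
in $T$. Using the defining commutation rule $tc = c^\sigma t + (a-b)(c^\sigma - c)$ of $T$ together with the commutativity of $F$, both sides simplify to $c^\sigma t + (a-b)c^\sigma - ac$. Proposition \ref{goodearl} then produces a unique ring homomorphism $\varphi \colon F[t,\sigma,\delta_a] \to T$ extending $\phi$ with $\varphi(t) = t - b$; by multiplicativity this is precisely the map $\sum g_i t^i \mapsto \sum g_i(t-b)^i$ described in the statement.

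Next I will apply Proposition \ref{goodearl} symmetrically to build $\psi \colon F[t,\sigma,\delta_{a-b}] \to F[t,\sigma,\delta_a]$, this time sending $t$ to $t + b$. The required compatibility $(t+b)c = c^\sigma(t+b) + (a-b)(c^\sigma - c)$ is verified by the parallel computation in $F[t,\sigma,\delta_a]$, again using commutativity of $F$ to slide $b$ past $c^\sigma$; both sides reduce to $c^\sigma t + ac^\sigma - ac + bc$.

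Finally, $\psi \circ \varphi$ and $\varphi \circ \psi$ are ring endomorphisms that restrict to the identity on $F$ and fix $t$, so the uniqueness clause of Proposition \ref{goodearl} forces each to equal the identity. The only step requiring any real computation is the compatibility check, and the only subtlety there is tracking how the cross-term $-bc$ arising from the shift $t \mapsto t - b$ is absorbed into the change of derivation from $\delta_a$ to $\delta_{a-b}$. This bookkeeping is the main obstacle, though it is routine once one is disciplined about which ring's defining relation is being applied at each step.
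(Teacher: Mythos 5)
Your proof is correct and follows the same strategy as the paper: verify the commutation relation $(t-b)c = c^\sigma(t-b) + \delta_a(c)$ in $F[t,\sigma,\delta_{a-b}]$ and invoke Proposition~\ref{goodearl} to obtain $\varphi$. In fact you are slightly more careful than the paper, which simply calls the output of Proposition~\ref{goodearl} ``the isomorphism'' even though that proposition only furnishes a homomorphism; your explicit construction of the inverse $\psi$ via a second application (sending $t\mapsto t+b$) and the appeal to uniqueness to show $\psi\circ\varphi = \varphi\circ\psi = \mathrm{id}$ closes that small gap cleanly.
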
 \begin{proof}
    Denote $T = F[t,\sigma,\delta_{a-b}]$. Then in the ring $T$ we have, for all $r \in F$:
    \begin{align*}
        (t-b)r &= r^\sigma t+\delta_{a-b}(r)-br = r^\sigma +(a-b)(r^\sigma-r)-br\\
        & = r^\sigma t+\delta_a(r)-r^\sigma b = r^\sigma(t-b)+\delta_a(r).
    \end{align*}
   Now apply Proposition \ref{goodearl}, with $\phi$ there taken as the identity and $y$ taken to be $t-b$. Then the isomorphism $\psi$ given by that proposition must coincide with the map $\vp$ here, since they agree on $t$ and on $F$.
\end{proof}

\begin{lemma}\label{dif} Let $a,b \in F,d \in \mathbb{N}$. The coefficient of $t^{d-1}$ in the polynomial $(t-b)^d \in F[t,\sigma,\delta_a]$ is $-\left(b+b^{\sigma}+b^{\sigma^{2}}+\ldots+b^{\sigma^{(d-1)}}\right)$. 
\end{lemma}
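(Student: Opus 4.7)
I would prove Lemma \ref{dif} by induction on $d$. The key observation that makes this routine is that in \emph{any} skew polynomial ring $F[t,\sigma,\delta]$, iterating the commutation rule $tr = r^\sigma t + \delta(r)$ yields, for every $r \in F$ and every $d \geq 0$, an identity of the form
$$
t^d r \;=\; r^{\sigma^d}\,t^d \;+\; (\text{terms of degree} \leq d-1 \text{ in } t),
$$
since each application of $\delta$ strictly drops the degree in $t$. This means that when we right-multiply a polynomial by an element of $F$, or by $t-b$, we can track the top two coefficients cleanly without having to evaluate $\delta_a$ explicitly.

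The base case $d=1$ is immediate: $(t-b)^1 = t-b$, and the coefficient of $t^0$ is $-b = -b^{\sigma^0}$. For the inductive step, assume
$$
(t-b)^d \;=\; t^d \;-\; \bigl(b+b^\sigma+\cdots+b^{\sigma^{d-1}}\bigr)\,t^{d-1} \;+\; (\text{terms of degree} \leq d-2).
$$
Multiplying on the right by $(t-b)$ and collecting contributions to $t^d$ and $t^{d+1}$:
\begin{itemize}
\item $t^d \cdot t$ contributes $t^{d+1}$;
\item $t^d \cdot (-b)$ contributes $-b^{\sigma^d}\,t^d$ plus terms of degree $\leq d-1$, by the observation above;
\item $-(b+\cdots+b^{\sigma^{d-1}})\,t^{d-1}\cdot t$ contributes $-(b+\cdots+b^{\sigma^{d-1}})\,t^d$ directly;
\item all remaining terms (from multiplying the degree-$(d-1)$ coefficient by $-b$, or the degree-$\leq d-2$ tail by $t-b$) have degree at most $d-1$ in $t$.
\end{itemize}
Summing the two $t^d$ contributions gives $-\bigl(b + b^\sigma + \cdots + b^{\sigma^d}\bigr)$, which is exactly the claimed coefficient for exponent $d+1$.

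There isn't really a serious obstacle: the only subtle point is remembering that the $\sigma$-derivation $\delta_a$ produces strictly lower-degree debris every time it appears, so it cannot contribute to the coefficient of $t^{d-1}$ in $(t-b)^d$ — only the ``pure shift by $\sigma$'' part $r \mapsto r^{\sigma}$ does. Once this is noted, the induction is essentially a two-line computation, and notably the answer does not depend on $a$.
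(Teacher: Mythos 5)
Your proof is correct and takes essentially the same approach as the paper: induction on $d$, tracking the two leading coefficients while noting that $\delta_a$ only produces lower-degree terms. The only cosmetic difference is that you expand $(t-b)^{d+1}$ as $(t-b)^d(t-b)$ (right-multiplication, using the observation $t^d r = r^{\sigma^d}t^d + \text{lower}$), whereas the paper expands it as $(t-b)(t-b)^d$ (left-multiplication, commuting a single $t$ past the coefficient); both give the same bookkeeping and the same answer.
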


\begin{proof} For $d = 1$ there is nothing to prove. Suppose by induction that we have proven the claim for a given $d$. Clearly $(t-b)^d$ is monic, so we may write:

$$(t-b)^d = t^d - \left(b+b^{\sigma}+b^{\sigma^{2}}+\ldots+b^{\sigma^{(d-1)}}\right)t^{d-1} + \text{ lower degree terms},$$
hence

\begin{align*}
    (t-b)^{d+1} &= (t-b)\cdot \left(t^d - \left(b+b^{\sigma}+b^{\sigma^{2}}+\ldots+b^{\sigma^{(d-1)}}\right)t^{d-1} + \text{ lower degree terms}\right)\\
    &= t^{d+1}-t\left(b+b^{\sigma}+b^{\sigma^{2}}+\ldots+b^{\sigma^{(d-1)}}\right)t^{d-1} - bt^d + \text{ lower degree terms}\\
    &= t^{d+1}-\left(b+b^{\sigma}+b^{\sigma^{2}}+\ldots+b^{\sigma^{(d-1)}}\right)^{\sigma}\cdot t \cdot t^{d-1} - bt^d + \text{ lower degree terms}\\
    &= t^{d+1}-\left(b^{\sigma}+b^{\sigma^2}+b^{\sigma^{3}}+\ldots+b^{\sigma^{(d)}}\right)\cdot t^{d} - bt^d + \text{ lower degree terms}\\
    &= t^{d+1}-\left(b+b^{\sigma}+b^{\sigma^2}+b^{\sigma^{3}}+\ldots+b^{\sigma^{(d)}}\right)\cdot t^{d} + \text{ lower degree terms},
\end{align*}
%
%
%
%
%
%
as needed. 
\end{proof}

\begin{lemma}\label{surjective} Let $d \in \mathbb{N}$. The map $b  \mapsto b+b^{\sigma}+b^{\sigma^{2}}+\ldots+b^{\sigma^{(d-1)}}$ on $F$ is surjective. \end{lemma}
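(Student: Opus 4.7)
The plan is to diagonalize the map on the monomial basis $\{x^q : q \in \Q\}$ of $F$ viewed as a $\C$-vector space. Since $\sigma(x^{m/n}) = (\sqrt[n]{\alpha})^m x^{m/n} = \alpha^{m/n} x^{m/n}$, the automorphism $\sigma$ acts diagonally with $\sigma(x^q) = \alpha^q x^q$ for every $q \in \Q$, and hence $\sigma^k(x^q) = \alpha^{kq} x^q$ for every $k \geq 0$.

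Writing an arbitrary $b \in F$ in the form $b = \sum_{q} b_q x^q$ (supported in some $\tfrac{1}{n}\Z$, bounded below), we obtain
\[
b + b^\sigma + b^{\sigma^2} + \cdots + b^{\sigma^{(d-1)}} = \sum_q S_q\, b_q\, x^q,
\]
where $S_q := 1 + \alpha^q + \alpha^{2q} + \cdots + \alpha^{(d-1)q}$. Thus the map in question is $\C$-linear and acts diagonally on the monomial basis with eigenvalue $S_q$ on $x^q$.

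The essential observation is that $S_q \neq 0$ for every $q \in \Q$: because $\alpha$ is a positive real number, each term $\alpha^{kq}$ is a positive real, so $S_q$ is a sum of $d$ positive real numbers and is therefore itself strictly positive.

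To conclude, given any $c = \sum_q c_q x^q \in F$, set $b := \sum_q (c_q / S_q)\, x^q$. Since $b$ has the same support as $c$ and its coefficients remain in $\C$, it lies in the same field $\C((x^{1/n}))$ as $c$, and by construction $b + b^\sigma + \cdots + b^{\sigma^{(d-1)}} = c$. I do not anticipate a serious obstacle here: the entire argument rests on the diagonal action of $\sigma$ on monomials, combined with the positivity of $\alpha$ that rules out any vanishing among the eigenvalues $S_q$.
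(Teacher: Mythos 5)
Your proof is correct and follows essentially the same approach as the paper: diagonalize the $\C$-linear map on the monomial basis, observe the eigenvalue on $x^q$ is the geometric sum $1 + \alpha^q + \cdots + \alpha^{(d-1)q}$, and note this is nonzero because $\alpha > 0$, so one can invert coefficientwise. The paper simply writes $q = j/m$ and $\beta_j = \alpha^{j/m}$, but the argument is the same.
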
 \begin{proof} The image of $f = \sum f_j x^\frac{j}{m}$ under the given map is:

\begin{align*}
    \left(\sum f_j x^\frac{j}{m}\right)+\left(\sum f_j \alpha^{ \frac{j}{m}} x^\frac{j}{m}\right) &+\ldots + \left(\sum f_j \alpha^{\frac{((d-1))j}{m}} x^\frac{j}{m}\right)\\
    &= \sum f_j \left(1+\alpha^{\frac{j}{m}}+\ldots+ \alpha^{\frac{(d-1)j}{m}}\right)x^\frac{j}{m}\\
    &= \sum f_j \left(1+\beta_j+\ldots+ \beta_j^{d-1}\right)x^\frac{j}{m},
\end{align*}
%
%
%
%
where $\beta_j = \alpha^{\frac{j}{m}}$ for each index $j$. From this it is clear that the given map is surjective: The preimage of an arbitrary element $\sum g_j x^\frac{j}{m} \in F$ (for some $m \geq 1$) is given by $\sum g_j (1+\beta_j+\ldots+ \beta_j^{d-1})^{-1}x^\frac{j}{m}$. (Note that $1+\beta_j+\ldots+ \beta_j^{d-1}$ is always non-zero since $\alpha$ is a positive real number.)\end{proof}

\begin{lemma}\label{zero_coefficient} Let $a \in F$ and let $g = t^d+g_1t^{d-1}+\ldots+g_d \in F[t,\sigma,\delta_a]$. Let $b \in F$ with $b+b^\sigma+b^{\sigma^{2}}+\ldots+b^{\sigma^{(d-1)}} = g_1$, and let $\vp \colon F[t,\sigma,\delta_a] \to F[t, \sigma,\delta_{a-b}]$ be the isomorphism given by Lemma \ref{shift}. Then $\vp(g) = t^d+h_1t^{d-1}+\ldots+h_d$ satisfies $h_1 = 0$. \end{lemma}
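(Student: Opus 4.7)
The plan is to compute $\vp(g)$ explicitly using the definition of $\vp$ from Lemma \ref{shift}, and then identify the coefficient of $t^{d-1}$ by collecting contributions term-by-term.

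First, I would write
\[
\vp(g) = \vp(t^d) + g_1\vp(t^{d-1}) + g_2\vp(t^{d-2}) + \ldots + g_d = (t-b)^d + g_1(t-b)^{d-1} + \ldots + g_d,
\]
viewed as an element of $F[t,\sigma,\delta_{a-b}]$. Each summand $g_i(t-b)^{d-i}$ has degree exactly $d-i$ in $t$, since $(t-b)^{d-i}$ is monic of that degree. Hence only the first two summands can contribute to the coefficient of $t^{d-1}$.

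Next I would apply Lemma \ref{dif} (with $d$ and $b$ as given) in the ring $F[t,\sigma,\delta_{a-b}]$ to read off the coefficient of $t^{d-1}$ in $(t-b)^d$ as $-(b+b^\sigma+\ldots+b^{\sigma^{d-1}})$, which by the hypothesis on $b$ equals $-g_1$. The polynomial $g_1(t-b)^{d-1}$ is monic of degree $d-1$ up to the scalar $g_1$, so its coefficient of $t^{d-1}$ is simply $g_1$. Adding these contributions yields
\[
h_1 = -g_1 + g_1 = 0,
\]
which is the desired conclusion.

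There is essentially no obstacle here: the lemma is a direct bookkeeping consequence of Lemma \ref{shift} (which guarantees $\vp$ is a well-defined ring isomorphism, so the expansion above makes sense in $F[t,\sigma,\delta_{a-b}]$) together with the leading-coefficient calculation of Lemma \ref{dif}. The only mildly subtle point worth flagging is that Lemma \ref{dif} must be applied in the target ring $F[t,\sigma,\delta_{a-b}]$ (since that is where $\vp(g)$ lives), but this causes no difficulty because the statement of Lemma \ref{dif} is valid for any choice of inner derivation parameter.
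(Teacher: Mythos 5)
Your proof is correct and follows essentially the same route as the paper: expand $\vp(g)$ via the definition from Lemma \ref{shift}, observe that only $(t-b)^d$ and $g_1(t-b)^{d-1}$ contribute to the $t^{d-1}$-coefficient, apply Lemma \ref{dif} to identify the former's contribution as $-g_1$, and cancel. The remark that Lemma \ref{dif} must be invoked in the target ring $F[t,\sigma,\delta_{a-b}]$ is a nice observation that the paper leaves implicit.
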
 \begin{proof} We have $$\vp(g) = (t-b)^d+g_1(t-b)^{d-1}+\text{ terms of lower degree}$$
$$ = (t-b)^d+g_1t^{d-1}+\text{ terms of lower degree}, $$
and by Lemma \ref{dif} we have:

$$(t-b)^{d} = t^{d}-\left(b+b^{\sigma}+b^{\sigma^{2}}+\ldots+b^{\sigma^{(d-1)}}\right)t^{d-1}+\text {terms of lower degree},$$
hence $$\vp(g) = t^d+\left(g_1-\left(b+b^{\sigma}+b^{\sigma^{2}}+\ldots+b^{\sigma^{(d-1)}}\right)\right)t^{d-1}+\text{ terms of lower degree}.$$
\end{proof}

\begin{lemma}\label{lem:isom} Let $a \in F$ and let $r \in \Q$ be a rational number. Consider the map $\psi \colon F[t,\sigma,\delta_a] \to F[t,\sigma,\delta_{ax^r}]$ given by $\psi(\sum g_i t^i) = \sum g_i(x^{-r} t)^i$. Then $\psi$ is an isomorphism. \end{lemma}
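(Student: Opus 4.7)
The plan is to mimic the proof of Lemma~\ref{shift}: apply Proposition~\ref{goodearl} to build $\psi$, and then apply it again (with the roles of $a$ and $ax^r$ swapped) to produce an inverse.

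First I would set $T = F[t,\sigma,\delta_{ax^r}]$ and $y = x^{-r}t \in T$. The key verification is the commutation relation $y\cdot c = c^\sigma y + \delta_a(c)$ for every $c \in F$. Using the defining rule $tc = c^\sigma t + \delta_{ax^r}(c) = c^\sigma t + ax^r(c^\sigma - c)$ in $T$ and the commutativity of $F$ (so that $x^{-r}$ commutes with $a$ and with $c^\sigma$), this unwinds as
\begin{align*}
yc &= x^{-r}\bigl(c^\sigma t + ax^r(c^\sigma - c)\bigr) \\
&= c^\sigma x^{-r} t + (x^{-r} a x^r)(c^\sigma - c) \\
&= c^\sigma y + a(c^\sigma - c) = c^\sigma y + \delta_a(c).
\end{align*}
Proposition~\ref{goodearl}, applied with $\phi = \id_F$ and this $y$, then produces a unique ring homomorphism $\psi \colon F[t,\sigma,\delta_a] \to T$ fixing $F$ and sending $t \mapsto x^{-r}t$. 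This is exactly the map described in the statement.

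For the inverse, I would run the symmetric computation inside $S = F[t,\sigma,\delta_a]$: setting $y' = x^{r} t \in S$, a parallel calculation shows $y'c = c^\sigma y' + \delta_{ax^r}(c)$ for every $c \in F$. Proposition~\ref{goodearl} then yields a ring homomorphism $\psi' \colon T \to S$ with $\psi'|_F = \id_F$ and $\psi'(t) = x^{r}t$. Both compositions $\psi \circ \psi'$ and $\psi' \circ \psi$ are ring endomorphisms that fix $F$ pointwise and send $t$ to $t$ (since $x^{\pm r}\in F$ and $\psi,\psi'$ are $F$-linear on scalars). As $F$ together with $t$ generate both $S$ and $T$ as rings, these compositions must equal the identity, so $\psi$ is an isomorphism.

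The only real technicality is the commutation relation in the opening step, which hinges on two uses of commutativity of $F$: namely $x^{-r}c^\sigma = c^\sigma x^{-r}$ and $x^{-r}ax^r = a$. Once this is in hand, the rest of the argument is bookkeeping via the universal property encoded in Proposition~\ref{goodearl}.
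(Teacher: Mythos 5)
Your proposal is correct and follows the same approach as the paper: verify the commutation relation $y c = c^\sigma y + \delta_a(c)$ for $y = x^{-r}t$ inside $T$, then invoke Proposition~\ref{goodearl}. You actually supply a detail the paper leaves implicit: Proposition~\ref{goodearl} only produces a homomorphism, and you explicitly construct the inverse $\psi'$ (using $y' = x^r t$) and check both composites are the identity, whereas the paper simply asserts the map obtained is the desired isomorphism.
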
 \begin{proof}
    Denote $T = F[t,\sigma,\delta_{ax^r}]$. In the ring $T$ we have, for all $f \in F$:
    $$(x^{-r}t)f = x^{-r}(f^\sigma t +\delta_{ax^r}(f)) = x^{-r}f^\sigma t+x^{-r}ax^r(f^\sigma-f)$$
    $$ = f^\sigma x^{-r} t+\delta_a(f).$$
   Now apply Proposition \ref{goodearl} (with $A = F$, $\phi$ being the identity automorphism on $F$, and $y$ taken to be $x^{-r}t$) to obtain the desired isomorphism, which must coincide with the map $\psi$ here since they agree on $t$ and on $F$.
\end{proof}

For any $0 \neq f \in \C((x^*))$ we let $\ord(f) \in \Q$ be the usual order of $f$ with respect to $x$, and put $\ord(0) = \infty$.

Given $a \in F$ and $n \in \mathbb{N}$, let $\C[[x^{\frac{1}{n}}]][t,\sigma,\delta_a]$ denote the subset of $F[t,\sigma,\delta_a]$ consisting of all polynomials in $t$ whose coefficients all belong to $\C[[x^{\frac{1}{n}}]]$. 

\begin{lemma}\label{subring} Let $n \in \mathbb{N}$. Suppose that $a \in \C[[x^{\frac{1}{n}}]]$. Then $\C[[x^{\frac{1}{n}}]][t,\sigma,\delta_a]$ is a subring of $F[t,\sigma,\delta_a]$. \end{lemma}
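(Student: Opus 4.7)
The plan is to verify directly that $B := \C[[x^{\frac{1}{n}}]]$ is stable under both $\sigma$ and $\delta_a$, and then either appeal to Proposition \ref{goodearl} to realize $B[t,\sigma|_B,\delta_a|_B]$ as a subring of $F[t,\sigma,\delta_a]$, or check closure of the subset $\C[[x^{\frac{1}{n}}]][t,\sigma,\delta_a]$ under addition and multiplication by hand. Both approaches rest on the same two stability properties.

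First, I would check that $\sigma(B) \subseteq B$. For any $b = \sum_{k \geq 0} c_k x^{\frac{k}{n}} \in B$ we have $\sigma(b) = \sum_{k \geq 0} c_k (\sqrt[n]{\alpha})^k x^{\frac{k}{n}}$, which is again a formal power series in $x^{\frac{1}{n}}$ with complex coefficients, hence lies in $B$. Next, since $a \in B$ by assumption and $B$ is a (commutative) subring of $F$, it follows that for every $b \in B$ the element $\delta_a(b) = a(b^\sigma - b)$ is a product and difference of elements of $B$, so $\delta_a(B) \subseteq B$.

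With these two facts in hand, I would then conclude as follows. The restrictions $\sigma|_B$ and $\delta_a|_B$ make sense as an endomorphism and a $\sigma|_B$-derivation of $B$, so one can form the skew polynomial ring $B[t,\sigma|_B,\delta_a|_B]$. Applying Proposition \ref{goodearl} with $T = F[t,\sigma,\delta_a]$, $\phi \colon B \hookrightarrow F \subseteq T$ the inclusion, and $y = t \in T$ (the compatibility condition $y\phi(b) = \phi(b^\sigma)y + \phi(\delta_a(b))$ being an immediate consequence of the defining relation of $T$), one obtains a ring homomorphism $\psi \colon B[t,\sigma|_B,\delta_a|_B] \to F[t,\sigma,\delta_a]$ acting as the identity on $B$ and on $t$. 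The image of $\psi$ is, by inspection, precisely the set $\C[[x^{\frac{1}{n}}]][t,\sigma,\delta_a]$, and being the image of a ring homomorphism it is a subring of $F[t,\sigma,\delta_a]$.

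There is no real obstacle here: the statement is essentially bookkeeping, and the only substantive content is the verification that $a \in \C[[x^{\frac{1}{n}}]]$ (rather than $a \in F$ generally) is exactly what is needed to keep $\delta_a$ from taking values outside $B$. This is why the hypothesis $a \in \C[[x^{\frac{1}{n}}]]$ is imposed in the statement.
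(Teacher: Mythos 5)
Your proof is correct and takes the same approach as the paper: both verify that $\sigma$ and $\delta_a$ stabilize $\C[[x^{1/n}]]$, which is the whole content of the lemma. The paper simply states these two facts and concludes "from which the claim is clear," while you additionally spell out a clean formalization via Proposition \ref{goodearl}; this is a reasonable, if slightly more elaborate, way of tying the bow.
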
 \begin{proof} 
Note that the automorphism $\sigma$ restricts to an automorphism of $\C[[x^\frac{1}{n}]]$, and that $\delta_a(f) \in \C[[x^\frac{1}{n}]]$ for all $f \in \C[[x^\frac{1}{n}]]$, from which the claim is clear. \end{proof}

We note that the subring $\C[[x^{\frac{1}{n}}]][t,\sigma,\delta_a]$ considered in Lemma \ref{subring} coincides with the ring $A[t,\sigma,\delta]$ considered in \S\ref{skew_hensel} (with $A = \C[[x^{\frac{1}{n}}]], \delta = \delta_a, \frm = x^{\frac{1}{n}}A$) and that the assumptions there are satisfied here, namely $\sigma(\frm) \subseteq \frm, \delta(\frm) \subseteq \frm$ and $\bar{\sigma} = \id, \bar{\delta} = 0$. Indeed, let $f = \sum f_i x^\frac{i}{n} \in \C[[x^{\frac{1}{n}}]]$. Clearly, if $f \in \frm$ (that is, if $f_0 = 0$) then $\sigma(f) \in \frm$ and $\delta(f) = a(f^\sigma-f) \in \frm$, and we have $\bar{\sigma}(\bar{f}) = \bar{\sigma}(f_0) = f_0 = \bar{f}$ and $$\bar{\delta}(\bar{f}) = \overline{a(f^\sigma-f)} = \bar{a}\cdot \overline{(f^\sigma-f)}=\overline{a\cdot 0} = 0.$$

For any $a \in F$, we extend the $x$-adic order from $\C[[x]]$ to $\C[[x]][t,\sigma,\delta_a]$, by $\ord(\sum p_i t^i) = \min(\ord(p_i))$, as in \S\ref{skew_hensel}.

\begin{proposition}\label{changing_n} Let $n \in \mathbb{N}$, $\tau = \sigma^{\frac{1}{n}}$ and let $\xi \colon \C[[x^{\frac{1}{n}}]] \to \C[[x]]$ be the isomorphism given by $\sum a_ix^{\frac{i}{n}} \mapsto \sum a_ix^i$. Let $a \in \C[[x^{\frac{1}{n}}]]$ and let $b =  \xi(a) \in \C[[x]]$. Then $\xi$ extends to an isomorphism $\xi \colon \C[[x^{\frac{1}{n}}]][t,\sigma,\delta_a] \to \C[[x]][t,\tau,\delta_b]$ given by $\sum p_i t^i \mapsto \sum \xi(p_i)t^i$. \end{proposition}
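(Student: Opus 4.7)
The plan is to invoke Proposition~\ref{goodearl} and reduce the construction of the extended map to two compatibility identities. Set $S = \C[[x^{\frac{1}{n}}]][t,\sigma,\delta_a]$ and $T = \C[[x]][t,\tau,\delta_b]$. Composing $\xi$ with the inclusion $\C[[x]] \hookrightarrow T$ gives a ring homomorphism $\phi \colon \C[[x^{\frac{1}{n}}]] \to T$, and I will take $y = t \in T$. To produce the desired $\psi$, the only thing to verify is the commutation relation $y\phi(c) = \phi(c^\sigma)y + \phi(\delta_a(c))$ for every $c \in \C[[x^{\frac{1}{n}}]]$. Since in $T$ we have $t\,u = \tau(u)\,t + \delta_b(u)$ for $u \in \C[[x]]$, this relation is equivalent to the two identities
\[
    \tau \circ \xi = \xi \circ \sigma \qquad \text{and} \qquad \delta_b \circ \xi = \xi \circ \delta_a
\]
holding on $\C[[x^{\frac{1}{n}}]]$.

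For the first identity, I would check it on a general series $c = \sum a_i x^{\frac{i}{n}}$. Using $\sigma(x^{\frac{1}{n}}) = \sqrt[n]{\alpha}\, x^{\frac{1}{n}}$ and the definition $\tau = \sigma^{\frac{1}{n}}$, which gives $\tau(x) = \sqrt[n]{\alpha}\, x$, both $\tau(\xi(c))$ and $\xi(\sigma(c))$ evaluate to $\sum a_i \alpha^{\frac{i}{n}} x^i$. The second identity is then purely formal: since $b = \xi(a)$ and $\xi$ is a ring homomorphism,
\[
    \xi(\delta_a(c)) = \xi(a)\bigl(\xi(c^\sigma) - \xi(c)\bigr) = b\bigl(\tau(\xi(c)) - \xi(c)\bigr) = \delta_b(\xi(c)),
\]
where the middle equality uses the first identity.

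With these in place, Proposition~\ref{goodearl} provides a unique ring homomorphism $\psi \colon S \to T$ with $\psi|_{\C[[x^{\frac{1}{n}}]]} = \xi$ and $\psi(t) = t$; by construction, $\psi$ acts as $\sum p_i t^i \mapsto \sum \xi(p_i) t^i$. Running exactly the same argument with $\xi^{-1} \colon \C[[x]] \to \C[[x^{\frac{1}{n}}]]$ (which sends $b \mapsto a$ and intertwines $\tau$ with $\sigma$) yields a homomorphism $\psi' \colon T \to S$ in the reverse direction; the compositions $\psi' \circ \psi$ and $\psi \circ \psi'$ agree with the identity on $t$ and on the respective base rings, so by the uniqueness clause of Proposition~\ref{goodearl} they are the identity maps. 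Hence $\psi$ is an isomorphism. There is no serious obstacle here --- the content of the proposition is just that the universal property of skew polynomial rings respects the renaming of the uniformizer $x^{\frac{1}{n}} \leftrightarrow x$.
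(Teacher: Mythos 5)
Your proof is correct and follows essentially the same route as the paper: verify the intertwining relations $\tau \circ \xi = \xi \circ \sigma$ and $\delta_b \circ \xi = \xi \circ \delta_a$, then invoke Proposition~\ref{goodearl} with $\phi = \xi$ and $y = t$. You are a bit more explicit than the paper in two places: you isolate the two identities cleanly and derive the second formally from the first, and you justify bijectivity by running Proposition~\ref{goodearl} in reverse with $\xi^{-1}$ and appealing to uniqueness, whereas the paper simply refers to the ``obtained isomorphism'' without spelling out why the homomorphism is invertible; both small improvements in rigor, but no change in substance.
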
 \begin{proof} For any $f \in \C[[x^{\frac{1}{n}}]]$ one checks directly that $\xi(f^\sigma) = \xi(f)^\tau$. Then in the ring $T = \C[[x]][t,\tau,\delta_b]$ we have $$t\xi(f) = \xi(f)^\tau t+ \delta_b(\xi(f)) = \xi(f^\sigma) t+ \xi(a)(\xi(f)^\tau-\xi(f))$$ $$= \xi(f^\sigma) t+ \xi(a\cdot(f^\tau-f)) = \xi(f^\sigma) t+ \xi(\delta_a(f)).$$
Now apply Proposition \ref{goodearl}, with $A, \phi, \delta,y$ there corresponding to $\C[[x^\frac{1}{n}]],\xi,\delta_a,t$ here, respectively. The obtained isomorphism must coincide with the map $\xi$ here, since they agree on $\C[[x^\frac{1}{n}]]$ and on $t$.
\end{proof}

Let $f \mapsto \bar{f}$ be the reduction map from $A=\C[[x]]$ to $\C = A/xA$, mapping a power series $f$ to its constant term $f_0$. By the definition of $\sigma$ it follows that $\overline{f^\sigma} = \overline{f}$ and that $\overline{\delta_a(f)} = 0$ for all $f \in \C[[x]]$. Then, as noted in the beginning of \S\ref{skew_hensel}, the map $p \mapsto \bar{p}$, defined by $\sum f_i t^i \mapsto \sum \overline{f_i}t^i$, is an epimorphism from $\C[[x]][t,\sigma,\delta_a]$ onto the (commutative) polynomial ring $\C[t]$.

We will need the following technical lemma:

\begin{lemma}\label{zero_sum} 
Let $d \in \N$ and let $c_1,\ldots,c_d$ be complex numbers belonging to the set $$\Gamma = \left\{\frac{d}{\alpha^{-n_1}+\alpha^{-n_2}+\ldots+\alpha^{-n_d}} - 1 \,\st\, n_1,\ldots,n_d \in \N \cup \{0\} \right\}.$$ If $c_1+c_2+\ldots+c_d = 0$, then $c_1 = c_2 = \ldots c_d = 0$. 
\end{lemma}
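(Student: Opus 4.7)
The plan is to reformulate the hypothesis as a tightness condition for an elementary inequality. By the definition of $\Gamma$, each $c_i$ has the form $c_i + 1 = d/S_i$, where
\[
S_i = \alpha^{-n_{i,1}} + \alpha^{-n_{i,2}} + \cdots + \alpha^{-n_{i,d}}
\]
for some non-negative integers $n_{i,1},\ldots,n_{i,d}$. The assumption $\sum_{i=1}^d c_i = 0$ then rewrites as
\[
\sum_{i=1}^d \frac{1}{S_i} = 1.
\]

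Next I split on the sign of $\alpha - 1$. Suppose first that $\alpha \geq 1$. Then $\alpha^{-n} \leq 1$ for every $n \in \N \cup \{0\}$, so each $S_i$ is a sum of $d$ positive reals that are each at most $1$; hence $S_i \leq d$ and $1/S_i \geq 1/d$. Summing the $d$ inequalities yields $\sum_i 1/S_i \geq 1$, with equality if and only if $1/S_i = 1/d$ for every $i$, i.e.\ $S_i = d$ for every $i$. The case $\alpha \leq 1$ is entirely symmetric: $\alpha^{-n} \geq 1$, so $S_i \geq d$, $1/S_i \leq 1/d$, and the equation $\sum_i 1/S_i = 1$ again forces $S_i = d$ for every $i$.

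In either case we obtain $S_i = d$ for all $i$, and therefore $c_i = d/S_i - 1 = 0$ for all $i$, which is the desired conclusion. The argument is entirely elementary; the only substantive point is the hypothesis that $\alpha$ is a \emph{positive real} number, which ensures that all the quantities $\alpha^{-n}$ are real and lie on the same side of $1$, enabling the one-sided inequalities above. I do not anticipate any real obstacle beyond being careful to cover both regimes $\alpha \geq 1$ and $\alpha \leq 1$ uniformly.
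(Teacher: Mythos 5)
Your proof is correct and follows essentially the same idea as the paper's: the key observation in both is that for a positive real $\alpha$, the sums $S_i = \alpha^{-n_{i,1}}+\cdots+\alpha^{-n_{i,d}}$ lie entirely on one side of $d$ (according as $\alpha\geq 1$ or $\alpha\leq 1$), so the $c_i$ all have a fixed sign and a zero sum forces each to vanish. Your reformulation via $\sum_i 1/S_i = 1$ is a cosmetic repackaging of the same one-sided inequality, not a genuinely different argument.
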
 

\begin{proof} If $\alpha = 1$ then $\Gamma = \{0\}$ and the claim holds trivially. If  $\alpha > 1$ then the elements of $\Gamma$ are clearly all non-negative real numbers, and similarly if $\alpha < 1$ then the elements of $\Gamma$ are all non-positive real numbers. In both cases, from $\sum c_i = 0$ it follows that $c_1 = \ldots = c_d =0$.
\end{proof}

\begin{proposition}\label{factor} Let $a \in \C[[x]]$ and let $g = t^d+g_{d-1}t^{d-1}+\ldots+g_0$ be a monic polynomial of degree $d > 1$ in $\C[[x]][t,\sigma,\delta_a]$, such that $\ord(g_{d-1}) > 0$ and $\min_i\ord(g_i) = 0$. Then $g$ has a non-trivial factor in $\C[[x]][t,\sigma,\delta_a]$. \end{proposition}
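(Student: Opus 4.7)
The plan is to reduce $g$ modulo $x$, produce a suitable factorization of the residue polynomial $\bar g \in \C[t]$, and lift this factorization via the skew Hensel's lemma (Proposition \ref{Hensel}) to obtain a non-trivial factorization of $g$ in $\C[[x]][t,\sigma,\delta_a]$. First I would analyze $\bar g$: by the hypothesis $\ord(g_{d-1}) > 0$ the coefficient of $t^{d-1}$ in $\bar g$ vanishes, and by $\min_i \ord(g_i) = 0$ we have $\bar g \neq t^d$. Consequently $\bar g$ cannot be of the form $(t-c)^d$ for any $c \in \C$, since the coefficient of $t^{d-1}$ in such an expansion equals $-dc$, which being zero forces $c=0$ and hence $\bar g = t^d$, contradicting our assumption. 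Therefore $\bar g$ has at least two distinct complex roots.

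The next step is to make the Hensel condition explicit on the residue level. Since $\sigma(x) = \alpha x$ and $\delta_a(x) = a(\alpha - 1)x$, the relation $tx = \alpha xt + a(\alpha-1)x$ in $R = \C[[x]][t,\sigma,\delta_a]$ rearranges to $xt = (\alpha^{-1} t + a(\alpha^{-1} - 1))x$, so the automorphism $\phi$ of $R$ from \S\ref{skew_hensel} satisfies $t^\phi = \alpha^{-1} t + a(\alpha^{-1} - 1)$. A straightforward induction then gives the residue identity $\overline{t^{\phi^n}} = \alpha^{-n} t + a_0(\alpha^{-n} - 1)$ in $\C[t]$, where $a_0 = \bar a$. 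Hence for any monic $\bar h = \prod_k (t - c'_k) \in \C[t]$, the polynomial $\overline{h^{\phi^n}}$ is a nonzero scalar multiple of $\prod_k (t - (\alpha^n c'_k + a_0(\alpha^n - 1)))$. Writing $\tilde c := c + a_0$, the Hensel condition that $\bar g_1$ and $\overline{g_2^{\phi^n}}$ be coprime for every $n \geq 1$ translates to: for every root $c$ of $\bar g_1$ and every root $c'$ of $\bar g_2$, one has $\tilde c \neq \alpha^n \tilde{c'}$ for all $n \geq 1$.

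I would then factor $\bar g = \bar g_1 \bar g_2$ so as to satisfy this condition, treating $\alpha = 1$ and $\alpha \neq 1$ separately. When $\alpha = 1$, the ring is commutative, $\phi = \id$, and the condition reduces to ordinary coprimality; grouping the distinct roots of $\bar g$ (of which there are at least two) produces monic coprime factors $\bar g_1, \bar g_2$ of positive degree. When $\alpha \neq 1$, observe first that some root $c_i$ must satisfy $\tilde c_i \neq 0$, for otherwise $\bar g = (t + a_0)^d$, and the vanishing of the $t^{d-1}$ coefficient forces $a_0 = 0$ and hence $\bar g = t^d$, excluded. Among all such roots, pick one, say $c$, with $|\tilde c|$ minimal if $\alpha > 1$ and maximal if $\alpha < 1$, and set $\bar g_1 = t - c$, $\bar g_2 = \bar g/(t - c)$. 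Then for any root $c'$ of $\bar g_2$: if $\tilde{c'} = 0$, then $\alpha^n \tilde{c'} = 0 \neq \tilde c$; otherwise $|\alpha^n \tilde{c'}| = \alpha^n |\tilde{c'}|$ is strictly greater (respectively, strictly smaller) than $|\tilde c|$ for all $n \geq 1$, so $\alpha^n \tilde{c'} \neq \tilde c$.

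Finally, applying Proposition \ref{Hensel} produces monic $\hat g_1, \hat g_2 \in \C[[x]][t,\sigma,\delta_a]$ with $g = \hat g_1 \hat g_2$ and $\overline{\hat g_j} = \bar g_j$. Both factors have positive degree strictly less than $d$, hence neither is a unit, yielding the required non-trivial factorization. The main obstacle is choosing the residue factorization $\bar g = \bar g_1 \bar g_2$ so as to meet the Hensel hypothesis, which is strictly stronger than ordinary coprimality; the key geometric fact for $\alpha \neq 1$ is that multiplication by $\alpha^n$ strictly alters absolute values for $n \geq 1$, which makes the extremal choice of $c$ effective.
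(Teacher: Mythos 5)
Your proof is correct, and it takes a genuinely different route from the paper's. The translation of the Hensel hypothesis into the condition $\tilde c \neq \alpha^n \tilde{c'}$ (where $\tilde c = c + a_0$, $n \geq 1$) is the same in both arguments, but the two proofs verify it very differently. The paper fixes the affine map $T(w) = \alpha^{-1}w + a_0(\alpha^{-1}-1)$, carefully chooses a root $c_1$ outside a certain explicit set $\Delta$ (using Lemma \ref{zero_sum} and the constraint $\sum c_i = 0$), and then partitions the roots into those lying in the forward $T$-orbit of $c_1$ and those outside; showing that both parts are nonempty takes some work. Your extremal-modulus argument cuts through this: since $\alpha$ is a positive real $\neq 1$, multiplication by $\alpha^n$ ($n \geq 1$) strictly changes $|\tilde{c'}|$, so picking $c$ with $\tilde{c} \neq 0$ and $|\tilde c|$ minimal (if $\alpha>1$) or maximal (if $\alpha<1$) immediately forces $\tilde c \neq \alpha^n \tilde{c'}$ for every root $c'$ of $\bar g/(t-c)$, including $c$ itself when it is a repeated root. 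Both approaches ultimately rely on the positivity of $\alpha$ (the paper via the sign structure of $\Delta$, you via moduli), but your version avoids the orbit bookkeeping, produces the split uniformly with $\bar g_1$ linear, and is shorter. One small point worth making explicit when writing this up: the Hensel condition only demands coprimality for $n \geq 1$, not $n=0$, which is exactly why it is harmless that $c$ may recur as a root of $\bar g_2$.
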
 \begin{proof} Consider a factorization $(t-c_1)\cdot \ldots \cdot (t-c_d)$ of $\bar{g} \in \C[t]$ over $\C$. Since $\ord(g_{d-1}) > 0$ we have $\overline{g_{d-1}} = 0$ and hence $c_1+\ldots+c_d = 0$. Consider the set:

$$\Delta = \left\{a_0\left(\frac{d}{\alpha^{-n_1}+\alpha^{-n_2}+\ldots+\alpha^{-n_d}} - 1\right) \,\st\, n_1,\ldots,n_d \in \N \cup \{0\} \right\}.$$
We claim that at least one of $c_1,\ldots,c_d$ does not belong to $\Delta$. Indeed, suppose that $c_1,\ldots,c_d \in \Delta$. If $a_0 = 0$ then $\Delta = \{0\}$ and then $c_1 = \ldots = c_d = 0$. If $a_0 \neq 0$ then $\sum \frac{c_i}{a_0} = \frac{\sum c_i}{a_0} = 0$, hence by Lemma \ref{zero_sum}, applied for $\Gamma = \frac{1}{a_0}\Delta$, we get that $\frac{c_i}{a_0} = 0$, hence $c_i = 0$, for all $i$. Thus in both cases we have $\bar{g} = t^d$, which cannot be since at least one of the coefficients of $g$ has a non-zero constant term, by our assumptions

Thus at least one of the $c_i$ does not belong to $\Delta$, and we shall assume without loss of generality that $c_1 \notin \Delta$.

Let $T \colon \C \to \C$ be the map given by $T(w) = \alpha^{-1} w + a_0(\alpha^{-1}-1)$, where $a_0$ is the constant term of $a$. Then for all $n \in \N$ and $w \in \C$, we have $T^n(w) = \alpha^{-n} w + a_0(\alpha^{-n}-1)$. 

Let $\calC$ denote the orbit $\{c_1,T(c_1),T^2(c_1),\ldots\}$ of $c_1$ via $T$. We reorder $c_2,\ldots,c_d$ such that $c_1,\ldots,c_j \in \calC$ and $c_{j+1},\ldots,c_d \notin \calC$, for some $1 \leq j \leq d$, and write $c_i = T^{n_i}(c_1)$ for suitable $n_2,\ldots,n_j \in \N \cup \{0\}$, and $n_1 = 0$. We claim that $j < d$, so that $c_{j+1},\ldots,c_d \notin \calC$. Indeed, if $j = d$,  Then:

$$0 = c_1+\ldots+c_d = c_1+T^{n_2}(c_1)+\ldots+T^{n_d}(c_1)$$ $$ = c_1+(\alpha^{-n_2}c_1+a_0(\alpha^{-n_1}-1))+\ldots+(\alpha^{-n_d}c_1+a_0(\alpha^{-n_d}-1))$$

$$= c_1(1+\alpha^{-n_2}+\ldots+\alpha^{-n_d})+a_0((\alpha^{-n_2}+\ldots+\alpha^{-n_d})-(d-1)) $$ $$= (a_0+c_1)(1+\alpha^{-n_2}+\ldots+\alpha^{-n_d})-a_0d,$$
hence $(a_0+c_1)(1+\alpha^{-n_2}+\ldots+\alpha^{-n_d}) = a_0d$. If $a_0 = 0$ it follows that $c_1 =0$, but also $\Delta = \{0\}$, in contradiction to $c_1 \notin \Delta$. Thus $a_0 \neq 0$, and then we get

$$\big(1+\frac{c_1}{a_0}\big)(1+\alpha^{-n_2}+\ldots+\alpha^{-n_d}) = d,$$
therefore

$$c_1 = a_0\left(\frac{d}{1+\alpha^{-n_2}+\ldots+\alpha^{-n_d}}-1\right),$$
hence $c_1 \in \Delta$ (with $n_1 = 0$), a contradiction. 

Thus we must have $j < d$. Now consider the monic polynomials $\bar{u} = \prod_{i=1}^j(t-c_i), \bar{v} = \prod_{i=j+1}^d(t-c_i) \in \C[t]$. Let us write $\bar{u} = \sum u_i t^i, \bar{v} = \sum v_i t^i$, and let us trivially lift $\bar{u},\bar{v}$ to polynomials $u = \sum u_i t^i, v = \sum v_i t^i$ in $\C[[x]][t,\sigma,\delta_a]$.

Let $\phi$ denote the inner automorphism of the ring $\C((x))[t,\sigma,\delta_a]$ given by $p^\phi = xpx^{-1}$. By Lemma \ref{two-sided}, this automorphism restricts to an automorphism of $\C[[x]][t,\sigma,\delta_a]$. We claim that for all $n \in \N$, the polynomials $\bar{u},\overline{v^{\phi^n}}$ are coprime in $\C[t]$. Indeed, suppose to the contrary that for some $n \in \N$, the polynomial $\overline{v^{\phi^n}}$ vanishes at one of the zeros $c_1,\ldots,c_j$ of $\bar{u}$, say at $c_m = T^{n_m}(c_1)$. 

In the ring $\C[[x]][t,\sigma,\delta_a]$ we have $tx = \alpha x t+a(\alpha-1)x$, 
hence

$$xt = \alpha^{-1} tx-a(1-\alpha^{-1})x = (\alpha^{-1} t+a(\alpha^{-1}-1))x,$$ 
hence $xtx^{-1} = \alpha^{-1} t-a(1-\alpha^{-1})$. 
Thus $$\phi(t) = xtx^{-1} \equiv \alpha^{-1} t +a_0(\alpha^{-1}-1) \quad (\text{mod} \, x).$$ 
Since $\phi(x) = x$, we get by induction that $\phi^n(t) \equiv \alpha^{-n}t +a_0(a^{-n}-1) \quad (\text{mod} \, x)$. Then 
$$\phi^n(v) = \phi^n(\sum v_i t^i) = \sum v_i (\phi^n(t))^i \equiv \sum v_i (\alpha^{-n}t+a_0(a^{-n}-1))^i \quad (\text{mod} \, x),$$ 
hence $\overline{\phi^n(v)}$ is obtained from $\bar{v}$ by applying the isomorphism $t \mapsto \alpha^{-n}t +a_0(\alpha^{-n}-1)$ of $\C[t]$. Now, since $\overline{\phi^n(v)}$ vanishes at $c_m$, we get that $\bar{v}$ vanishes at $\alpha^{-n}c_m +a_0 (\alpha^{-n}-1) = T^n(c_m) = T^{n+n_m}(c_1)$, but this means that one of $c_{j+1},\ldots,c_d$ is $T^{n+n_m}(c_1)$, hence belongs to the orbit $\calC$, contradicting our choice of $j$.

Thus $\bar{u},\overline{v^{\phi^n}}$ are coprime in $\C[t]$. Finally, by Proposition \ref{Hensel} (with $A = \C[[x]], \delta = \delta_{a}$) , we get that $g$ has a non-trivial factor in $\C[[x]][t,\sigma,\delta_a] \subseteq F[t,\sigma,\delta_a]$. \end{proof}

\begin{theorem}\label{main} The field $F$ is algebraically closed with respect to $\sigma$. \end{theorem}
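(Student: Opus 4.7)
The plan is to prove by induction on $d = \deg f$ that every non-constant polynomial $f \in F[t,\sigma]$ factors into linear terms. The base case $d=1$ is trivial. For $d>1$, it is enough to exhibit a non-trivial factorization $f = f_1 f_2$: after normalizing by a scalar unit each factor becomes monic of degree strictly less than $d$, and the induction hypothesis finishes the proof.

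To produce such a factorization, I would mimic the classical Newton--Puiseux proof (Newton polygon rescale, kill the subleading coefficient, apply Hensel's lemma) using the skew analogues from \S\ref{sec:newton}. First, Lemma \ref{lem:isom} with $a=0$ rescales $t \mapsto x^{-r}t$ for the steepest Newton polygon slope $r = \max_{i<d}(-\ord(f_i)/(d-i))$ (taken over indices with $f_i \neq 0$; if no such index exists, then $f = t^d$ already factors as $t \cdots t$). Because $a=0$, this stays inside $F[t,\sigma]$, and after normalizing by a scalar unit one obtains a monic $\tilde f \in \C[[x^{1/n}]][t,\sigma]$ for some $n \in \N$ satisfying $\min_i \ord(\tilde f_i)=0$. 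Next, Proposition \ref{changing_n} provides an isomorphism $\xi$ onto $\C[[x]][t,\tau]$, where $\tau$ is the $\C$-automorphism of $\C[[x]]$ with $\tau(x)=\sqrt[n]{\alpha}\,x$---a positive-real dilation of the same type as $\sigma$. Finally, the analogue of Lemma \ref{surjective} for $\tau$ on $\C[[x]]$ (its proof transfers verbatim) produces $b \in \C[[x]]$ with $b+b^\tau+\cdots+b^{\tau^{d-1}}=\xi(\tilde f)_{d-1}$, and then the shift $\vp$ from Lemma \ref{shift} gives $g \in \C[[x]][t,\tau,\delta_{-b}]$ with $g_{d-1}=0$. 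When $\min_i \ord(g_i)=0$ still holds, Proposition \ref{factor} (whose proof transfers verbatim to the pair $(\tau,\sqrt[n]{\alpha})$ in place of $(\sigma,\alpha)$) supplies a non-trivial factorization of $g$, which pulls back through $\vp^{-1}$, $\xi^{-1}$, and $\psi^{-1}$ to a non-trivial factorization of $f$ in $F[t,\sigma]$.

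The principal obstacle is showing that the shift preserves the Newton polygon condition $\min_i \ord(g_i)=0$. Modulo $x$ the shift acts on residue polynomials as the translation $t \mapsto t-\bar b$, so the condition survives iff $\bar b = 0$, equivalently $\ord(\xi(\tilde f)_{d-1})>0$. This holds except when the Newton slope $r$ is achieved at $i=d-1$ \emph{and} the residue polynomial $\overline{\xi(\tilde f)}$ has the exceptional form $(t+\bar b)^d$; in that degenerate scenario the shift produces $\bar g = t^d$ and Proposition \ref{factor} does not apply. I would handle this case by iterating: perform a fresh rescaling of $g$, which by Lemma \ref{lem:isom} lands in a ring of the form $\C[[x]][t,\tau,\delta_{-bx^{r'}}]$ (and may reintroduce a $t^{d-1}$ term through the interaction of the rescaling with the derivation $\delta_{-b}$), shift once more to re-annihilate it, and repeat. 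Each round refines the leading terms of a putative Puiseux-series root of $f$; either some finite iteration produces a polynomial to which Proposition \ref{factor} applies, or the iteration converges by completeness of $\C[[x^{1/N}]]$ for sufficiently large $N$ to a $\sigma$-zero $c \in F$ of $f$, yielding $t-c$ as a linear right factor and finishing the induction. Carefully tracking the derivations $\delta_{-b x^{r'}}$ that accumulate through successive rescalings is the essential bookkeeping task that the lemmas of \S\ref{sec:newton} are designed to manage.
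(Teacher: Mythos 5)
Your reduction steps for the non-degenerate case (rescale via Lemma \ref{lem:isom}, pass to $\C[[x]]$ via Proposition \ref{changing_n}, shift via Lemmas \ref{surjective} and \ref{shift}, apply Proposition \ref{factor}) track the paper's proof fairly closely, modulo a harmless reordering of the change-of-$n$ step. But the degenerate case — where after killing the $t^{d-1}$ coefficient the residue polynomial collapses to $t^d$ — is exactly where the new content of the theorem lives, and there your proposal has a real gap.

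You propose to handle that case by iterating the rescale-and-shift procedure, as in the classical Newton--Puiseux argument, and you gesture at convergence ``by completeness of $\C[[x^{1/N}]]$ for sufficiently large $N$.'' That single phrase hides two unresolved problems. First, you would need to show that the Puiseux denominators arising over the course of the iteration are bounded, so that a single $N$ suffices; this is already a nontrivial lemma in the commutative case and you supply no argument for it. Second, as you yourself note, each round of rescaling conjugates the ambient derivation $\delta_{-b}$ into something of the form $\delta_{-bx^{r'}}$ and may regenerate a $t^{d-1}$ term, so the iteration lives in a moving family of rings $F[t,\sigma,\delta_{a_k}]$; you acknowledge that ``carefully tracking the derivations \ldots is the essential bookkeeping task'' but then do not do it. Neither assertion is obviously false, but as written the iteration is not a proof.

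The paper avoids iteration entirely, and this is the crucial point you are missing. After the shift, the degenerate case has $\ord(f'_i)>0$ for all $i<d$, so $\overline{\vp(f)} = t^d$; Proposition \ref{factor} indeed cannot apply. But the paper then applies Proposition \ref{Hensel} \emph{directly} to the factorization $t^d = t^{d-1}\cdot t$, taking $g=t^{d-1}$ and $h=t$. The reason this works is precisely the presence of the derivation $\delta_{-b}$: in $\C[[x]][t,\sigma,\delta_{-b}]$ the conjugation automorphism $\phi(p)=xpx^{-1}$ sends $t$ to $\alpha^{-1}t+b(1-\alpha^{-1})$, so modulo $x$ one gets $\overline{\phi^n(t)} = \alpha^{-n}t + b_0(1-\alpha^{-1})(1+\alpha^{-1}+\cdots+\alpha^{-(n-1)})$. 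Because $\ord(f_{d-1})=0$ forces $\ord(b)=0$ and hence $b_0\neq 0$, and because $\alpha\neq 1$, this residue has a nonzero constant term for every $n\geq 1$, so $\overline{h^{\phi^n}}$ is coprime to $\bar g = t^{d-1}$, and Hensel's lemma applies. In other words, the derivation you are trying to iterate away is exactly what makes the single application of Hensel's lemma succeed. That observation — that $\delta_{-b}$ with $b_0 \neq 0$ turns $\phi$ into a nontrivial affine substitution on residues, rescuing the coprimality condition in the case $\bar f = t^d$ — is the essential idea, and it does not appear in your proposal.

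Two smaller corrections. The claim that the Newton-polygon condition $\min_i\ord(g_i)=0$ survives the shift ``iff $\bar b=0$'' is wrong; it survives iff $\bar f \neq (t+b_0)^d$, which can certainly hold with $b_0\neq 0$. And you should make explicit, before invoking the surjectivity of $b\mapsto b+b^\tau+\cdots+b^{\tau^{d-1}}$ over $\C[[x]]$ and then dismissing the degenerate case, that $\ord(f_{d-1})=0$ (else Proposition \ref{factor} applies before any shift) — this is what guarantees $b_0\neq 0$, and it is load-bearing in the paper's final Hensel application.
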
 \begin{proof} Let $f = \sum_{i=0}^d f_i t^i$ be a polynomial in $F[t,\sigma]$, which we assume without loss of generality to be monic, of degree $d \geq 2$. We shall show that $f$ has a non-trivial factor, from which the theorem follows by induction on the degree. If $f_i = 0$ for all $i$, then $f = t^d = t\cdot t^{d-1}$ factors, and so we may assume that at least one of the $f_i$ is non-zero. Thus $r = \max\{\frac{-\ord(f_i)}{d-i} \st i \in \{0,1,\ldots,d-1\}\}$ is a rational number. By Lemma \ref{lem:isom} (applied with $a = 0$) we have an isomorphism $\psi \colon F[t,\sigma] \to F[t,\sigma]$ which maps $f$ to $\psi(f) = (x^{-r} t)^d + f_{d-1}(x^{-r} t)^{d-1}+\ldots+f_0$. For all $i \geq 0$ we have $(x^{-r}t)^i =  \beta_i x^{-ri}t^i$, where $\beta_i = \alpha^{-\frac{i(i+1)}{2}}$ (proof by induction on $i$), hence 

$$\psi(f) = \beta_d x^{-rd} t^d + \beta_{d-1}f_{d-1}x^{-r(d-1)} t^{d-1}+\ldots+f_0,$$ hence 

$$\beta_d^{-1}x^{rd}\psi(f) = t^d + \beta_d^{-1}\beta_{d-1}f_{d-1}x^{r} t^{d-1}+\ldots+\beta_d^{-1}f_0x^{rd},$$

and for all $0 \leq i \leq d-1$ we have, by our choice of $r$, $\ord(\beta_d^{-1}\beta_{i}f_ix^{(d-i)r}) = \ord(f_i)+r(d-i) \geq 0$, and moreover equality holds for at least one index $i$. 

Thus, by replacing $f$ with $\beta_d^{-1}x^{rd}\psi(f)$, we may assume that $\ord(f_i) \geq 0$ for all $0 \leq i \leq d-1$ and that $\ord(f_i) = 0$ for some $i$. If $\ord(f_{d-1}) > 0$ then $f$ has a non-trivial factor by Proposition \ref{factor}. Thus we may assume that $\ord(f_{d-1}) = 0$. By Lemma \ref{surjective}, there exists $b \in F$ such that $b+b^\sigma+b^{\sigma^{2}}+\ldots+b^{\sigma^{(d-1)}} = f_{d-1}$, and clearly we have $\ord(b) = \ord(f_{d-1}) = 0$. Consider the isomorphism $\vp \colon F[t,\sigma] \to F[t,\sigma,\delta_{-b}]$ given by Lemma \ref{shift}. The coefficients of $\vp(f) = \sum f_i(t-b)^i \in F[t,\sigma,\delta_{-b}]$ are all of non-negative order, since the order of $b$ and of all of the $f_i$ is non-negative. Let us write $\vp(f) = \sum f_i' t^i$. Since $\vp$ preserves degrees in $t$, it suffices to find a non-trivial factor for $\vp(f)$. As mentioned, $\ord(f_i') \geq 0$ for all $i$, and we have $f_{d-1}' = 0$ by Lemma \ref{zero_coefficient}. In particular, $\ord(f_{d-1}') = \infty > 0$. 

Now, the element $b$ and all of the $f_i'$ belong to $\C[[x^\frac{1}{n}]]$, for some sufficiently large $n\in \N$. By applying the isomorphism given in Proposition \ref{changing_n}, we may assume that $n = 1$. 

If there exists $0 \leq i <d-1$ such that $\ord(f_i') = 0$, then $\vp(f)$ has a non-trivial factor in $\C[[x]][t,\sigma,\delta_{-b}]$, again by Proposition \ref{factor}. Thus we may assume that $\ord(f_i') > 0$ for all $i$. Let $\phi$ denote the inner automorphism of $\C((x))[t,\sigma,\delta_{-b}]$ given by $p^\phi = xpx^{-1}$. By Lemma \ref{two-sided} (with $A = \C[[x]], \delta = \delta_{-b}$), this automorphism restricts to an automorphism of $\C[[x]][t,\sigma,\delta_{-b}]$

Now consider the epimorphism $p \mapsto \bar{p}$ from $\C[[x]][t,\sigma,\delta_{-b}]$ onto $\C[t]$. Let $g = t^{d-1},h = t$. We have $\overline{\vp(f)} = t^d = t^{d-1} \cdot t = \bar{g}\bar{h}$. We claim that for all $n \in \N$, the polynomials $\bar{g},\overline{h^{\phi^n}}$ are coprime in $\C[t]$. Indeed, in the ring $\C[[x]][t,\sigma,\delta_{-b}]$ we have $$t x = \alpha x t + (-b)(\alpha^{-1}-1)x,$$ hence $$\phi(t) = xtx^{-1} = \alpha^{-1}t+b(1-\alpha^{-1}),$$

hence $\overline{\phi(t)} = \alpha^{-1}t+b_0(1-\alpha^{-1})$, where $b_0$ is the constant term of $b$. It follows by induction that 

$$\overline{\phi(t)^n} = \alpha^{-n}t+b_0(1-\alpha^{-1})(1+\alpha^{-1}+\ldots+\alpha^{-(n-1)}),$$ 

for all $n \in \N$. We may assume that $\alpha \neq 1$ (if $\alpha = 1$ we are in the well-known commutative case) and we have $b_0 \neq 0$, since $\ord(b) = 0$. Thus the presented polynomial is coprime to $\bar{g} = t^{d-1}$ in $\C[t]$ for all $n \in \N$. Finally, by Proposition \ref{Hensel} the polynomial $\vp(f)$ has a non-trivial factor, as needed. \end{proof}

\begin{remark}\label{non_real} 
In Theorem \ref{main} one may replace $\C$ by any algebraically closed field containing $\C$, and the proof remains the same. 
However, we may not replace the real number $\alpha$ which determines the automorphism $\sigma$ by an arbitrary non-zero complex number: 
For any $\alpha \in \C^\times$, and for each $k\geq 1$ let $\sqrt[k]{\alpha}$ denote a $k$-th root of $\alpha$ such that $\sqrt[kn]{\alpha}^k = \sqrt[n]{\alpha}$ for all $n,k \geq 1$. (If $\alpha = re^{\rm{i} \theta}$ is given in polar\footnote{We use the symbol $\rm{i}$ to denote the complex number $\sqrt{-1}$, as opposed to the letter $i$ which we reserve for indices.} 
form, take $\sqrt[k]{\alpha} = \sqrt[k]{r}e^{\rm{i}\theta/k}$, where $\sqrt[k]{r} \in \R$ is a positive $k$-th root of $r$).  
Then we have the $\C$-automorphism $\sigma$ of $\C((x^*))$, given by $\sigma(x^{\frac{1}{k}}) = \sqrt[k]{\alpha}x^{\frac{1}{k}}$ for all $k \geq 1$. 
One may ask whether the field $F$ is algebraically closed with respect to all such automorphisms. 
The answer is generally negative: For example, for $\alpha = \rm{i}$, the polynomial $t^2-(1+x^2) \in F[t,\sigma]$ does not have a $\sigma$-zero. 
Indeed, suppose to the contrary that $f \in F$ satisfies $f^\sigma f = 1+x^2$. Then clearly $f$ has order $0$, and we may write $f = f_0 + f_q x^q+\ldots$, with $f_0 = \pm 1$ and $f_q$ being the second non-zero coefficient that appears in $f$, for some $0 < q \in \Q$. Then $$1+x^2 = f^\sigma f = 1+f_0f_q(1+{\rm{i}}^q)x^{q}+\ldots$$
and clearly we must have $q \leq 2$. If $q < 2$, then $f_0f_q(1+\rm{i}^q) \neq 0$, contradicting the presented equality, and for $q = 2$ we get $1+\rm{i}^q = 0 \neq 1$, also a contradiction.  Thus $t^2-(1+x^2)$ does not have a $\sigma$-zero. \end{remark}

The example in Remark \ref{non_real} raises the following theoretical question: Does there exist a field $F$ which is algebraically closed with respect to all automorphisms of $F$? We suspect that such a field does not exist.

We conclude this work with the following remark:
\begin{remark}\label{final}
Consider the polynomial $p = (t-1)^2 = t^2-2t+1 \in F[t,\sigma]$. The only $\sigma$-zero of $f$ is $1$. Indeed, for $\alpha = 1$ (that is, $\sigma = \id$) the claim is obvious. Suppose that $\alpha \neq 1$ and that $f = \sum_{i \geq m} f_i x^\frac{i}{n}$ with $f_m \neq 0$ is a $\sigma$-zero of $p$. Then 

$$f^\sigma f - 2f+1 = (\sum_{i \geq m} f_i \alpha^\frac{i}{n}x^\frac{i}{n})(\sum_{i \geq m} f_i x^\frac{i}{n}) -2(\sum_{i \geq m} f_i x^\frac{i}{n})+1 = 0.$$

By comparing coefficients we immediately see that $m = 0$ and $f_0 = 1$. The coefficient of $x^\frac{1}{n}$ in the left-hand side of the presented equality is then $f_1+\alpha^\frac{1}{n} f_1 -2f_1 = (\alpha^\frac{1}{n}-1)f_1$, hence we must have $f_1 = 0$. Suppose by induction that $f_1 = \ldots = f_k = 0$ for some $k \geq 1$. Then the coefficient of $x^\frac{k+1}{n}$ in the left-hand side of the presented equality is $f_{k+1}\alpha^\frac{k+1}{n} + f_{k+1}-2f_{k+1} = f_{k+1}(\alpha^\frac{k+1}{n} - 1)$, hence $f_{k+1} = 0$. Thus $f_i = 0$ for all $i \geq 1$. 

This example (together with Theorem \ref{main}, for $\alpha \neq 1$) resolves the question of Aryapoor mentioned in Remark \ref{open} in \S\ref{prelim}.
\end{remark}


\begin{thebibliography}{13}
\expandafter\ifx\csname natexlab\endcsname\relax\def\natexlab#1{#1}\fi
\providecommand{\url}[1]{\texttt{#1}}
\providecommand{\href}[2]{#2}
\providecommand{\path}[1]{#1}
\providecommand{\DOIprefix}{doi:}
\providecommand{\ArXivprefix}{arXiv:}
\providecommand{\URLprefix}{URL: }
\providecommand{\Pubmedprefix}{pmid:}
\providecommand{\doi}[1]{\href{http://dx.doi.org/#1}{\path{#1}}}
\providecommand{\Pubmed}[1]{\href{pmid:#1}{\path{#1}}}
\providecommand{\bibinfo}[2]{#2}
\ifx\xfnm\relax \def\xfnm[#1]{\unskip,\space#1}\fi
\bibitem[{Abhyankar(1990)}]{Ab90}
\bibinfo{author}{Abhyankar, S.S.}, \bibinfo{year}{1990}.
\newblock \bibinfo{title}{Algebraic geometry for scientists and engineers},
  \bibinfo{publisher}{American Mathematical Society}.
  volume~\bibinfo{volume}{35} of \textit{\bibinfo{series}{Mathematical Surveys
  and Monographs}}.
\bibitem[{Aryapoor(2009)}]{Ar09}
\bibinfo{author}{Aryapoor, M.}, \bibinfo{year}{2009}.
\newblock \bibinfo{title}{Non-commutative henselian rings}.
\newblock \bibinfo{journal}{Journal of Algebra} \bibinfo{volume}{322},
  \bibinfo{pages}{2191–2198}.
\bibitem[{Aryapoor(2023)}]{Ar23}
\bibinfo{author}{Aryapoor, M.}, \bibinfo{year}{2023}.
\newblock \bibinfo{title}{On some notions of algebraically closed
  $\sigma$-fields}.
\newblock \bibinfo{journal}{Journal of Algebra and its applications} .
\bibitem[{Bergen et~al.(2015)Bergen, Giesbrecht, Shivakumar and Zhang}]{BGS15}
\bibinfo{author}{Bergen, J.}, \bibinfo{author}{Giesbrecht, M.},
  \bibinfo{author}{Shivakumar, P.}, \bibinfo{author}{Zhang, Y.},
  \bibinfo{year}{2015}.
\newblock \bibinfo{title}{Factorizations for difference operators}.
\newblock \bibinfo{journal}{Advances in difference equations}
  \bibinfo{volume}{57}, \bibinfo{pages}{1--6}.
\bibitem[{Cohn(1991)}]{Coh91}
\bibinfo{author}{Cohn, P.}, \bibinfo{year}{1991}.
\newblock \bibinfo{title}{Algebraic numbers and algebraic functions}.
\newblock \bibinfo{publisher}{Taylor and Francis}.
\bibitem[{Cohn(1995)}]{Coh95}
\bibinfo{author}{Cohn, P.}, \bibinfo{year}{1995}.
\newblock \bibinfo{title}{Skew Fields: Theory of General Division Rings}.
\newblock \bibinfo{publisher}{Cambridge University Press}.
\bibitem[{Goodearl and Warfield(2004)}]{GW04}
\bibinfo{author}{Goodearl, K.R.}, \bibinfo{author}{Warfield, R.B.},
  \bibinfo{year}{2004}.
\newblock \bibinfo{title}{An Introduction to Noncommutative Noetherian rings}.
\newblock \bibinfo{publisher}{Cambridge University Press}.
\newblock \bibinfo{note}{2nd ed.}
\bibitem[{Lam and Leroy(1988)}]{LL88}
\bibinfo{author}{Lam, T.}, \bibinfo{author}{Leroy, A.}, \bibinfo{year}{1988}.
\newblock \bibinfo{title}{Vandermonde and wronsksian matrices over division
  rings}.
\newblock \bibinfo{journal}{Journal of Algebra} \bibinfo{volume}{119},
  \bibinfo{pages}{308--336}.
\bibitem[{Ore(1933a)}]{Ore33b}
\bibinfo{author}{Ore, O.}, \bibinfo{year}{1933}a.
\newblock \bibinfo{title}{On a special class of polynomials}.
\newblock \bibinfo{journal}{Transactions of the American Mathematical Society}
  \bibinfo{volume}{35}, \bibinfo{pages}{559--584}.
\bibitem[{Ore(1933b)}]{Ore33}
\bibinfo{author}{Ore, O.}, \bibinfo{year}{1933}b.
\newblock \bibinfo{title}{Theory of non-commutative polynomials}.
\newblock \bibinfo{journal}{Annals of Mathematics} \bibinfo{volume}{34},
  \bibinfo{pages}{480--508}.
\bibitem[{Pumpl{\"u}n(2015)}]{Pum15}
\bibinfo{author}{Pumpl{\"u}n, S.}, \bibinfo{year}{2015}.
\newblock \bibinfo{title}{Factoring skew polynomials over hamilton's quaternion
  algebra and the complex numbers}.
\newblock \bibinfo{journal}{Journal of Algebra} \bibinfo{volume}{427},
  \bibinfo{pages}{20--29}.
\bibitem[{Smith(1977)}]{Sm77}
\bibinfo{author}{Smith, K.}, \bibinfo{year}{1977}.
\newblock \bibinfo{title}{Algebraically closed noncommutative polynomial
  rings}.
\newblock \bibinfo{journal}{Communications in algebra} \bibinfo{volume}{5},
  \bibinfo{pages}{331--346}.
\bibitem[{Veluscek(2010)}]{Vel10}
\bibinfo{author}{Veluscek, D.}, \bibinfo{year}{2010}.
\newblock \bibinfo{title}{Higher product of pythogoras numbers of skew fields}.
\newblock \bibinfo{journal}{Asian-European Journal of mathematics}
  \bibinfo{volume}{3}, \bibinfo{pages}{193–207}.

\end{thebibliography}


\end{document}